\newcommand{\lbl}[1]{\label{#1}}
\newtheorem{theo}{Theorem}[section]
\newtheorem{lem}{Lemma}[section]
\newtheorem{remark}{Remark}[section]
\numberwithin{equation}{section}
\newcommand{\be}{\begin{equation}}
\newcommand{\ee}{\end{equation}}
\newcommand\bes{\begin{eqnarray}} \newcommand\ees{\end{eqnarray}}
\newcommand{\bess}{\begin{eqnarray*}}
\newcommand{\eess}{\end{eqnarray*}}
\newcommand\ep{\varepsilon}
\newcommand\rh{\rightarrow}
\newcommand\kk{\left}
\newcommand\rr{\right}
\newcommand\dd{\displaystyle}
\newcommand\df{\dd\frac}
\newcommand\ap{\alpha}
\newcommand\yy{\infty}
\newcommand\R{\mathbb{R}}
\newcommand\ud{\underline}
\newcommand\bu{\bar u}
\newcommand\bv{\bar v}
\newcommand\bh{\bar h}
\newcommand\hw{\hat w}
\newcommand\hz{\hat z}
\newcommand\hh{\hat h}
\newcommand\oa{\Omega_{1T}}
\newcommand\ob{\Omega_{2T}}
\newcommand\oc{\Omega_{3T}}
\newcommand\ot{\Omega_T}
\newcommand\dt{\Delta_T}
\def\theequation{\arabic{section}.\arabic{equation}}
\begin{document}
\setlength{\baselineskip}{16pt} \pagestyle{myheadings}

\begin{center}{\Large\bf The diffusive Beddington-DeAngelis predator-prey model}\\[2mm]
{\Large\bf with nonlinear prey-taxis and free boundary}\footnote{This work was
supported by NSFC Grants 11371113 and 11771110}\\[4mm]
 {\Large  Jianping Wang, \ \ Mingxin Wang\footnote{Corresponding author. {\sl E-mail}: mxwang@hit.edu.cn}}\\[1mm]
{Department of Mathematics, Harbin Institute of Technology, Harbin 150001, PR China}
\end{center}

\begin{quote}
\noindent{\bf Abstract.} The diffusive Beddington-DeAngelis predator-prey model
with nonlinear prey-taxis and free boundary is considered. We investigate the existence and uniqueness, regularity and uniform estimates, and long time behavior of the global solution. Some sufficient conditions for both spreading and vanishing are established.

\noindent{\bf Keywords:} Diffusive predator-prey model; Prey-taxis; Free boundary; Spreading and vanishing; Long time behavior.

\noindent {\bf AMS subject classifications (2010)}:
35K51, 35R35, 92B05, 35B40.
 \end{quote}

 \section{Introduction}
\setcounter{equation}{0} {\setlength\arraycolsep{2pt}

The dynamical relationship between the predator and prey has been investigated widely in recent years due to its universal existence and importance in mathematical biology and ecology.
Let $\Omega$ be a bounded domain in $\mathbb{R}^n$ ($n\geq1$). After rescaling, the diffusive predator-prey model with Beddington-DeAngelis functional response takes the form \cite{Be, DGO}:
   \bess
 \left\{\begin{array}{lll}
 u_t-u_{xx}=-au+\dd\frac{buv}{c+u+mv}, &x\in\Omega,\ \ t>0,\\[4mm]
 v_t-dv_{xx}=v(q-v)-\dd\frac{ruv}{c+u+mv},\ \ &x\in\Omega,\ \ t>0,
 \end{array}\right.
 \eess
where where $u$ and $v$ represent predator and prey densities, respectively. Constants $a,b,c, m, q, r,d$ are positive, $a$ is the mortality rate of the predator which does not depend on the prey density, the function $\frac{rv}{c+u+mv}$ is the Beddington-DeAngelis functional response, $b/r$ is the conversion rate from prey to predator.

In the above model, the predator and prey species are usually assumed to move randomly in their habitat. It has been recognized that in the spatial predator-prey interaction, in addition to the random diffusion of predator and prey, the spatiotemporal variations of the predator velocity are affected by the prey gradient \cite{ABM,Kare,Lee}. The diffusive predator-prey model with Beddington-DeAngelis functional response and prey-taxis reads as follows
   \bes
   \left\{\begin{array}{lll}
 u_t-u_{xx}+(u\chi(u)v_x)_x=-au+\dd\frac{buv}{c+u+mv}, \ \ &x\in\Omega,\ \ t>0,\\[4mm]
 v_t-dv_{xx}=v(q-v)-\dd\frac{ruv}{c+u+mv},\ \ &x\in\Omega,\ \ t>0,
 \end{array}\right.
 \label{1.0}
 \ees
In this model, the predator is attracted by the prey and $\chi$ denotes their prey-tactic sensitivity and satisfies
\bes\left\{\begin{array}{lll}
\chi(u)\in C^1([0,\infty)),\ \chi(u)=0\, \ {\rm for}\ u\geq u_m,\ {\rm and}\ \chi'(u)\ {\rm is\ Lipschitz\ continuous},\ {\rm i.e.},\\
|\chi'(u_1)-\chi'(u_2)|\leq L|u_1-u_2|\, \ {\rm for\ any}\ u_1, u_2\in[0,\infty),
 \end{array}\right.\label{1.2}
 \ees
where $u_m$ and $L$ are two positive constants.

The assumption that $\chi(u)\equiv0$ for $u\geq u_m$ has a clear biological interpretation \cite{ABM}: the predator stops to accumulate at a given point after its density attains a certain threshold value $u_m$ and the prey-tactic cross diffusion $\chi(u)$ vanishes identically when $u\geq u_m$. The assumption that $\chi'(u)$ is Lipschitz continuous is a regularity requirement for our qualitative analysis. Let $\eta(u)=u\chi(u)$, then it is easily seen that
 \begin{enumerate}[leftmargin=2em]
\item[$\bullet$] \, $\eta(u)$ and $\eta'(u)$ are bounded, and $\eta'(u)$ is Lipschitz continuous.
\end{enumerate}

In many realistic modeling situations, both the predator and prey have a tendency to emigrate from the boundary to obtain their new habitat and to improve the living environment
(\cite{Wjde14, wang2015, WZhang, WZjdde17, ZW2014, ZLC}). Hence it is more reasonable to consider the domain with a free boundary. As a general rule, to avoid being hunted, the prey will have a stronger tendency than the predator. So, we may consider that the free boundary is caused only by the prey.
For simplicity, we assume that the species only spreads to the right (the left boundary is fixed) in a one-dimensional environment. According to the deductions of the free boundary conditions in \cite{BDK} and \cite{WZjdde17}, a free boundary problem related to \eqref{1.0} can be written as
  \bes
 \left\{\begin{array}{lll}
 u_t-u_{xx}+(u\chi(u)v_x)_x=\dd\frac{buv}{c+u+mv}-au:=f(u,v),\ \ &t>0,\ \ 0<x<h(t),\\[4mm]
 v_t-dv_{xx}=v(q-v)-\dd\frac{ruv}{c+u+mv}:=g(u,v),\ \ &t>0, \ \ 0<x<h(t),\\[2mm]
 u_x=v_x=0,\ \ \ &t\ge 0,\ \ x=0,\\[1mm]
  u=v=0, \ \ \ &t\ge0,\ \ x=h(t),\\[1mm]
 h'(t)=-\mu v_x(t,h(t)), \ &t\ge0,\\[1mm]
 h(0)=h_0,\\[1mm]
 u(0,x)=u_0(x),\ \ v(0,x)=v_0(x),\ \ &0\leq x\leq h_0,
 \end{array}\right.\label{1.1}
 \ees
where $h_0, \mu$ are given positive constants. In the problem \eqref{1.1} it is assumed that the free boundary is caused only by the prey.

It is interesting to investigate the dynamics of the problem \eqref{1.1} because of the influence of chemotactic cross-diffusion. We will show that \eqref{1.1} has a unique solution and $\dd\lim_{t\to\infty} h(t)\leq\infty$. Moreover, if $\dd\lim_{t\to\infty} h(t)<\infty$, then the predator and prey fails to establish and vanishes eventually. Some sufficient conditions for spreading and vanishing are given.

Throughout this paper we also assume that $0<\alpha<1$ and
 \bes\left\{\begin{array}{l}
 u_0\in C^{2+\alpha}([0,h_0]),\  u'_0(0)=0,\  u_0(x)\geq,\,\not\equiv 0\ \ {\rm in}\, \
[0,h_0), \ \ u_0(x)=0\ \ {\rm in}\, \ [h_0,\yy),\\[.5mm]
v_0\in C^{2+\alpha}([0,h_0]),\  v'_0(0)=0,\  v_0(x)\geq,\,\not\equiv 0\ \ {\rm in}\, \
[0,h_0), \ \ v_0(x)=0\ \ {\rm in}\, \ [h_0,\yy).
 \end{array}\right.\label{1.3}
 \ees

This article is divided into five sections. Section 2 proves the global existence and uniqueness of solutions. Section 3 establishes an important estimate for the solution. Section 4 is devoted to understanding the behavior of the solution. Section 5 provides some sufficient conditions for spreading and vanishing.

Before ending this section we should mention that the free boundary problems of the diffusive competition models have been studied widely by many authors. Please refer to \cite{CLW, dulin2013, DWZ15, GW, GW15, WjZ, WZzamp, Wcom15, WZjdde14, Wu15, zhaowang2014, ZZL}.

\section{Existence and uniqueness}

For $T>0$ we set $D_T=[0, T]\times[0, h(t)]$ and $\dt:=[0,T]\times[0,1]$.

\begin{theo}\label{t2.1}
Assume that $\chi(u)$ and $(u_0,v_0)$ satisfy the conditions \eqref{1.2} and \eqref{1.3},
respectively.

{\rm(i)}\, Then there is a $T>0$ such that the problem \eqref{1.1} admits a unique solution
\bes
(u,v,h)\in [C^{1+\frac{\alpha}2,2+\alpha}(D_T)]^2\times C^{1+\frac{1+\alpha}2}([0,T]), \label{2.1}
\ees
and $h'(t)>0$ in $(0,T]$ and
\bes
\|u,\,v\|_{C^{1+\frac{\alpha}2,2+\alpha}(D_T)}+\|h\|_{C^{1+\frac{1+\alpha}2}([0,T])}\leq C, \label{2.2}
\ees
where positive constants $T$ and $C$ depend on $\|u_0(x)\|_{C^{2+\alpha}([0,h_0])}$, $\|v_0(x)\|_{C^{2+\alpha}([0,h_0])}$, $h_0$ and $h^*:=-\mu v_0'(h_0)$.

{\rm(ii)}\, Let $0<\tau<\infty$ and $(u,v,h)\in [C^{1+\frac{\alpha}2,2+\alpha}(D_{\tau})]^2\times C^{1+\frac{1+\alpha}2}([0,\tau])$ be the unique solution of \eqref{1.1}. Then there exist positive constant $M_i$, $i=1,2,3$, independent of $\tau$, such that
 \bes
&0<u\leq M_1,\ \ 0< v\leq M_2\ \ &{\rm in}\ \ (0, \tau]\times[0, h(t)), \label{2.5}\\[1mm]
&0<h'(t)\leq M_3\ \ &{\rm in}\ \ (0, \tau].\label{2.7}
 \ees
\end{theo}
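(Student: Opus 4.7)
For Part (i), my approach is to flatten the free boundary via $y=x/h(t)$, converting the moving interval $[0,h(t)]$ into the fixed interval $[0,1]$. Setting $\tilde u(t,y)=u(t,h(t)y)$ and $\tilde v(t,y)=v(t,h(t)y)$, the system \eqref{1.1} becomes a quasilinear parabolic system for $(\tilde u,\tilde v)$ on $\dt$ together with the ODE $h'(t)=-(\mu/h(t))\tilde v_y(t,1)$; writing $\eta(u)=u\chi(u)$, the transformed prey-taxis contribution is $h^{-2}\bigl[\eta(\tilde u)\tilde v_{yy}+\eta'(\tilde u)\tilde u_y\tilde v_y\bigr]$, whose coefficients are bounded and Lipschitz by \eqref{1.2}. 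I would then run a standard contraction mapping argument on a closed ball in $[C^{1+\alpha/2,\,2+\alpha}(\dt)]^2\times C^{1+(1+\alpha)/2}([0,T])$ centred at a smooth extension of the initial data: for a given iterate $(\tilde u,\tilde v,h)$, solve the two linear parabolic equations with mixed Neumann--Dirichlet data for $(\hat u,\hat v)$ via Schauder theory, then update $\hat h$ by integrating the ODE. For $T$ sufficiently small the map is a contraction, and its unique fixed point yields the local classical solution with \eqref{2.1}--\eqref{2.2}. The strict inequality $h'(t)>0$ on $(0,T]$ follows from $v\ge 0$, $v(t,h(t))=0$, and the Hopf boundary-point lemma applied to $v$.

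For Part (ii), I would establish the three uniform bounds separately. Since $v_t-dv_{xx}\le v(q-v)$ with $v_x(t,0)=0$ and $v(t,h(t))=0$, the constant $M_2:=\max\{q,\|v_0\|_\infty\}$ is a supersolution (as $M_2(q-M_2)\le 0$), so the comparison principle gives $v\le M_2$. For the bound on $u$, let $u^\ast=\sup_{D_\tau}u$. If $u^\ast>\max\{u_m,\|u_0\|_\infty\}$, then $u^\ast$ is attained at some $(t^\ast,x^\ast)$ with $u(t^\ast,x^\ast)>u_m$; by \eqref{1.2} and continuity $\chi(u)\equiv 0$ in a neighbourhood of $(t^\ast,x^\ast)$, so the prey-taxis term vanishes there and the equation reduces locally to $u_t-u_{xx}=f(u,v)$. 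Applying the parabolic maximum principle (after even reflection across $x=0$ if the extremum lies on the Neumann boundary) yields $f(u^\ast,v(t^\ast,x^\ast))\ge 0$, i.e.\ $bv\ge a(c+u^\ast+mv)$, which forces $u^\ast\le(b-am)_+M_2/a$. Hence $u\le M_1:=\max\{u_m,\|u_0\|_\infty,(b-am)_+M_2/a\}$. For $h'(t)=-\mu v_x(t,h(t))$, I would use the classical free-boundary barrier
\[
w(t,x)=M_2\bigl[2\sigma(h(t)-x)-\sigma^2(h(t)-x)^2\bigr]
\]
on the moving strip $\{(t,x):h(t)-\sigma^{-1}\le x\le h(t)\}$, choosing $\sigma$ large enough so that $\sigma^{-1}\le h_0$, $2d\sigma^2\ge q$, and $\sigma\ge\|v_0'\|_\infty/M_2$. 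Direct computation gives $w_t\ge 0$, $w_{xx}=-2M_2\sigma^2$, $w\le M_2$, and therefore $w_t-dw_{xx}\ge qw\ge v(q-v)\ge g(u,v)$, together with $w\ge v$ on the parabolic boundary of the strip; the comparison principle then yields $v\le w$ there, so $0<h'(t)\le-\mu w_x(t,h(t))=2\mu M_2\sigma=:M_3$.

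\textbf{Main obstacle.} The technically heaviest step is the contraction in Part (i): the quasilinear cross term $\eta'(\tilde u)\tilde u_y\tilde v_y$ forces one to bound differences of products of Hölder-continuous derivatives between successive iterates, and this is precisely where the Lipschitz hypothesis on $\chi'$ in \eqref{1.2} is needed in order to estimate expressions such as $\|\eta'(\tilde u_1)\tilde u_{1,y}\tilde v_{1,y}-\eta'(\tilde u_2)\tilde u_{2,y}\tilde v_{2,y}\|_{C^{\alpha/2,\alpha}}$ in terms of $\|\tilde u_1-\tilde u_2\|$ and $\|\tilde v_1-\tilde v_2\|$. In Part (ii) the only subtleties are the reflection across $x=0$ required to apply the maximum principle cleanly at the Neumann boundary for $u$, and the simultaneous choice of $\sigma$ in the barrier so that $w$ is both a supersolution of the $v$-equation on the strip and dominates $v$ on its entire parabolic boundary.
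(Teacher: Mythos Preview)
Your Part (ii) is essentially identical to the paper's argument: the same constant supersolution $M_2=\max\{q,\|v_0\|_\infty\}$ for $v$, the same barrier $w(t,x)=M_2[2\sigma(h(t)-x)-\sigma^2(h(t)-x)^2]$ with the same three constraints on $\sigma$ to bound $h'$, and the same maximum-principle argument for $u$ exploiting that $\chi,\chi'$ vanish above $u_m$. The paper handles a putative maximum of $u$ at $x=0$ via the Hopf lemma rather than your even reflection, which is equivalent.

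For Part (i) you and the paper agree on the straightening $y=x/h(t)$ and on solving first the $\hat v$-equation, then the $\hat h$-ODE, then the (now linear) $\hat u$-equation; but the fixed-point framework differs. The paper does \emph{not} run a contraction in $[C^{1+\alpha/2,2+\alpha}]^2\times C^{1+(1+\alpha)/2}$. Instead it applies the \emph{Schauder} fixed-point theorem in the lower space $[C^{\alpha/2,\alpha}]^2\times C^{1+\alpha/2}$, using the regularity gain $C^{1+\alpha/2,2+\alpha}\hookrightarrow\hookrightarrow C^{\alpha/2,\alpha}$ for compactness and time-smallness for self-mapping. Uniqueness is then proved \emph{separately}: take two solutions, subtract, and apply parabolic $L^p$ estimates plus Sobolev embedding (not Schauder) to show the difference vanishes for $T\ll1$. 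Your contraction route is legitimate and has the advantage of giving uniqueness for free, but the obstacle you flag is genuine: with only Lipschitz $\eta'$ one cannot bound $[\eta'(\tilde u_1)-\eta'(\tilde u_2)]_{C^{\alpha/2,\alpha}}$ directly by a small multiple of the top-order difference, and you would have to exploit the shared initial data (interpolation inequalities of the form $\|\cdot\|_{C^{\alpha/2,\alpha}}\le CT^\beta\|\cdot\|_{C^{1+\alpha/2,2+\alpha}}$) in essentially the same way the paper does in its uniqueness step. The paper's decoupling---compactness for existence, $L^p$ difference estimates for uniqueness---makes each half a little cleaner, at the cost of doing two arguments instead of one.
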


\begin{proof}
The claims concerning local-in-time existence of the classical solution to the problem \eqref{1.1} are well established by a fixed point theorem. The proof is quite standard, we refer readers to \cite{Wjfa16, WZjdde17}.

First of all, in order to straighten the free boundary, we define
\[ y=x/h(t),\ \ w(t,y)=u(t,h(t)y),\ \ z(t,y)=v(t,h(t)y).\]
Then it follows from $\eqref{1.1}$ that $w(t,y),z(t,y),h(t)$ satisfy
\bes
 \left\{\begin{array}{lll}
 w_t-\zeta(t)w_{yy}-\xi(t)yw_y+\zeta(t)\eta'(w)z_y w_y\\[1mm]
 \qquad=-\zeta(t)\eta(w)z_{yy}+f(w,z),\ \ &t>0, \ \ 0<y<1,\\[1mm]
 z_t-d\zeta(t)z_{yy}-\xi(t)yz_y=g(w,z),\ \ &t>0, \ \ 0<y<1,\\[1mm]
  w_y(t,0)=w(t,1)=z_y(t,0)=z(t,1)=0,\ \ \ &t\ge0,\\[1mm]
 w(0,y)=u_0(h_0y):=w_0(y),\ \ \ &0\leq y\leq 1,\\[1mm]
  z(0,y)=v_0(h_0y):=z_0(y),\ \ \ &0\leq y\leq 1,\\[1mm]
 h'(t)=-\mu \frac{1}{h(t)}z_y(t,1),\ \ t\geq 0;\ \ h(0)=h_0,
 \end{array}\right.\label{21a}
\ees
where $\zeta(t)=h^{-2}(t)$ and $\xi(t)=h'(t)/h(t)$.

{\it Step 1: The existence}. We prove the existence result by a fixed point theorem. Let $h^*:=-\mu v_0'(h_0)$. For $0<T\leq \min\big\{1,\frac{h_0}{2(1+h^*)}\big\}$, we define
\bess
&\oa:=\{ w\in C^{\ap/2,\ap}(\dt):\ w(0,y)=w_0(y),\ w\ge0,\ \|w-w_0\|_{C^{\ap/2,\ap}(\dt)}\le 1 \},\\
&\ob:=\{ z\in C^{\ap/2,\ap}(\dt):\ z(0,y)=z_0(y),\ z\ge0,\ \|z-z_0\|_{C^{\ap/2,\ap}(\dt)}\le 1 \},\\
&\oc:=\{ h\in C^{1+\ap/2}([0,T]):\ h(0)=0,\ h'(0)=h^*,\ \|h'-h^*\|_{C^{\ap/2}([0,T])}\le1 \}.
\eess
Clearly, $\ot:=\oa\times\ob\times\oc$ is a bounded and closed convex set of  $[C^{\ap/2,\ap}(\dt)]^2\times C^{1+\ap/2}([0,T])$. For any $h\in \oc$, we have
\[ |h(t)-h_0|\leq T\|h'\|_\yy\leq T(1+h^*)<\frac{h_0}2, \]
which yields that
\[ \frac{h_0}2\le h(t)\le \frac{3h_0}2,\ \ \forall\ t\in[0,T]. \]
Therefore the transformation $(t,x)\rh (t,y)$ introduced at the beginning of the proof is well defined. Denote
 \[\Lambda=\{\|u_0,v_0\|_{C^{\ap}([0,h_0])}, \, h_0, \, h^*\}.\]
For the given $(w,z,h)\in\ot$. Direct computations entails that there exists a constant $C_1>0$ depends on $\Lambda$ such that
\[\|\zeta(t)\|_{C^{\ap/2,\ap}(\dt)}+\|\xi(t)\|_{C^{\ap/2,\ap}(\dt)}+\|g(w,z)\|_{C^{\ap/2,\ap}(\dt)}\le C_1.\]
Recall that $z_0(y):=v_0(h_0y)\in C^{2+\ap}([0,1])$. The parabolic Schauder theory asserts that the problem
 \bess
 \left\{\begin{array}{lll}
 \hz_t-d\zeta(t)\hz_{yy}-\xi(t)y\hz_y=g(w(t,y),z(t,y)),\ \ &0<t\le T, \ \ 0<y<1,\\[1mm]
\hz_y(t,0)=\hz(t,1)=0,\ \ \ &0\le t\le T,\\[1mm]
 \hz(0,y)=z_0(y),\ \ \ &0\leq y\leq 1
 \end{array}\right.
\eess
 has a unique solution $\hz\in C^{1+\ap/2,2+\ap}(\dt)$ and
 \bes
  \|\hz\|_{C^{1+\ap/2,2+\ap}(\dt)}\le C_2 \lbl{24a}
  \ees
for some $C_2>0$ which is a constant dependent on $\Lambda$ and $\|v_0\|_{C^{2+\ap}([0,h_0])}$. Moreover, by the maximum principle we infer that $\hz>0$ in $(0,T]\times[0,1)$. Hence, $\hz_y(t,1)<0$ for $t\in(0,T]$ by the Hopf boundary lemma. Thus the problem
 \[ \hh'(t)=-\mu \frac{1}{h(t)}\hz_y(t,1),\ \ 0<t\le T; \qquad \hh(0)=h_0\]
has a unique solution $\hh(t)$. Obviously, $\hh'(0)=h^*$, $\hh'(t)>0$ in $(0,T]$ and
\bes
 \hh'\in C^{\frac{1+\ap}2}([0,T])\ \ {\rm and}\ \  \|\hh'\|_{C^{\frac{1+\ap}2}([0,T])}\le C_3,\lbl{25a}
 \ees
where $C_3>0$ is dependent on $\Lambda$ and $\|v_0\|_{C^{2+\ap}([0,h_0])}$.

Now let us consider the following problem
\bes
 \left\{\begin{array}{lll}
 \hw_t-\zeta(t)\hw_{yy}+[\zeta(t)\eta'(w)\hz_y-\xi(t)y]\hw_y+\zeta(t)\chi(w)\hz_{yy}\hw\\[1mm]
 \qquad=f(w(t,y),z(t,y)),\quad \ 0<t\le T, \ \ 0<y<1,\\[1mm]
 \hw_y(t,0)=\hw(t,1)=0,\qquad \, \ 0\le t\le T,\\[1mm]
 \hw(0,y)=w_0(y),\qquad \qquad \quad 0\leq y\leq 1.
 \end{array}\right.\lbl{26a}
\ees
In view of \eqref{24a} and assumption \eqref{1.2}, it holds that
\[ \|\zeta(t)\eta'(w)\hz_y-\xi(t)y\|_{C^{\ap/2,\ap}(\dt)}+ \|\zeta(t)\chi(w)\hz_{yy}\|_{C^{\ap/2,\ap}(\dt)}+\|f(w,z)\|_{C^{\ap/2,\ap}(\dt)}\le C_4 \]
for some positive constant $C_4$ which depends on $\Lambda$,  $\|v_0\|_{C^{2+\ap}([0,h_0])}$, $\|u_0\|_{C^{\ap}([0,h_0])}$ and $L$, where $L$ is given by \eqref{1.2}. Recalling $\hw_0(y):=u_0(h_0y)\in C^{2+\ap}([0,1])$ and using parabolic Schauder theory, the problem \eqref{26a} admits a unique solution $\hw\in C^{1+\ap/2,2+\ap}(\dt)$ and
\bes
 \|\hw\|_{C^{1+\ap/2,2+\ap}(\dt)}\le C_5, \lbl{27a}
\ees
where $C_5$ is a positive constant depending on $\|u_0,v_0\|_{C^{2+\ap}([0,h_0])}, h_0,h^*$ and $L$. Furthermore, it follows from the maximum principle that $\hw>0$ in $(0,T]\times[0,1)$.

Based on the above analysis, we can define
$$F:\ot\rh [C^{\ap/2,\ap}(\dt)]^2\times C^{1+\ap/2}([0,T])$$
by
\[ F(w,v,h)=(\hw,\hz,\hh). \]
Evidently, $(w,v,h)\in\ot$ is a fixed point of $F$ if and only if it solves \eqref{21a}. From \eqref{24a}, \eqref{25a} and \eqref{27a}, we see that $F$ is compact and
\bess
&\|\hw-w_0\|_{C^{\ap/2,\ap}(\dt)}\le C_6\max\{T^{\frac{\ap}2},T^{1-\frac{\ap}2}\}\|\hw\|_{C^{1+\frac{\ap}2,2+\ap}(\dt)}\le C_6C_5\max\{T^{\frac{\ap}2},T^{1-\frac{\ap}2}\},\\
&\|\hz-z_0\|_{C^{\ap/2,\ap}(\dt)}\le C_6\max\{T^{\frac{\ap}2},T^{1-\frac{\ap}2}\}\|\hz\|_{C^{1+\frac{\ap}2,2+\ap}(\dt)}\le C_6C_2\max\{T^{\frac{\ap}2},T^{1-\frac{\ap}2}\},\\
&\|\hh'-h^*\|_{C^{\ap/2}([0,T])}\le C_7T^{\frac{1}2}\|\hh'\|_{C^{\frac{1+\ap}2}([0,T])}\le C_7C_3T^{\frac{1}2},
\eess
where $C_6,C_7>0$ is independent of time $T$. Hence $F$ maps $\ot$ into itself if $T$ is small enough. Consequently $F$ has at least one fixed point $(w,z,h)\in\ot$, and then \eqref{21a} admits at least one solution $(w,z,h)$ defined in $[0,T]$. Moreover, $w,z>0$ in $(0,T]\times[0,1)$. Noticing that $z(t,1)=0$, it deduces by the Hopf boundary lemma that $z_y(t,1)<0$, which implies $h'(t)>0$ for $t\in(0,T]$.

{\it Step 2: The uniqueness}. Let $(w_i,z_i,h_i),i=1,2$ be two local solutions of \eqref{21a}, which are defined for $t\in[0,T]$ with $0<T\ll 1$. By the maximum principle and Hopf boundary lemma, we have
\[ w_i,z_i>0\ \ {\rm for}\ t\in(0,T],\ x\in[0,1), \]
and
\[ h_i'(t)=-\mu z_{i,y}(t,1)>0 \ \ {\rm for}\ t\in(0,T]. \]
Thereby, we may assume that
\[ h_0\leq h_i(t)\le h_0+1\ \ {\rm in}\ [0,T],\ i=1,2. \]
Denote $\zeta_i(t)=h_i^{-2}(t),\xi_i(t)=h_i'(t)/h_i(t)$. Set
 \[w=w_1-w_2, \ \ \ z=z_1-z_2, \ \ \ h=h_1-h_2.\]
It then follows from \eqref{21a} that $z$ solves the problem
 \bess
 \left\{\begin{array}{lll}
 z_t-d\zeta_1(t)z_{yy}-\xi_1(t)yz_y-A_1(t,y)z=A(t,y),\ \ &0<t\le T, \ \ 0<y<1,\\[1mm]
z_y(t,0)=z(t,1)=0,\ \ \ &0\le t\le T,\\[1mm]
 z(0,y)=0,\ \ \ &0\leq y\leq 1,
 \end{array}\right.
\eess
where
\bess
A(t,y)=dz_{2,yy}[\zeta_1(t)-\zeta_2(t)] +z_{2,y}[\xi_1(t)y-\xi_2(t)y]+z_2(A_1(t,y)-A_2(t,y))
\eess
and
\bess
A_i(t,y)= q-z_i-\dd\frac{rw_i}{c+w_i+mz_i}, \ \ \ i=1,2.
\eess
It is easy to see that $\zeta_1,\xi_1,A_1\in L^\yy(\dt)$. Applying the parabolic $L^p$ theory and Sobolev embedding theorem we derive that
\begin{align}
\|z\|_{C^{\frac{1+\ap}2,1+\ap}(\dt)}+\|z\|_{W^{1,2}_p(\dt)}\le C_8\|A\|_{L^p(\dt)}
\le C_9\big(\|h\|_{C^1([0,T])}+\|w,\,z\|_{L^p(\dt)}\big),\lbl{28a}
\end{align}
where $C_8,C_9$ are independent of $T$. Notice that $z(0,y)=0$, it follows that
\begin{align}
\|z\|_{C^{\frac{\ap}2,0}(\dt)}=& \|z-z_0\|_{C(\dt)}+[z-z_{0}]_{C^{\frac{\ap}2,0}(\dt)} \nonumber \\
\le& T^{\frac{\ap}2}\|z\|_{C^{\frac{\ap}2,0}(\dt)}+T^{\frac{1}2}\|z\|_{C^{\frac{1+\ap}2,0}(\dt)}\nonumber\\
\le& (T^{\frac{\ap}2}+T^{\frac{1}2})\|z\|_{C^{\frac{1+\ap}2,1+\ap}(\dt)}\nonumber\\
\le& 2C_{9}T^{\frac{\ap}2}\big(\|h\|_{C^1([0,T])}+\|w,\,z\|_{L^p(\dt)}\big).\lbl{2.13}
\end{align}

Similarly, $w$ satisfies
\bess
 \left\{\begin{array}{lll}
 w_t-\zeta_1(t)w_{yy}+B_1(t,y) w_y-D_1(t,y)w =E(t,y),\ \ &0<t\le T, \ \ 0<y<1,\\[1mm]
 w_y(t,0)=w(t,1)=0,\ \ \ &0\le t\le T,\\[1mm]
 w(0,y)=0,\ \ \ &0\leq y\leq 1,
 \end{array}\right.
\eess
where
\bess
E(t,y)=w_{2,yy}[\zeta_1(t)-\zeta_2(t)]-w_{2,y}\kk[B_1(t,y)-B_2(t,y)\rr]+w_2\kk[D_1(t,y)-D_2(t,y) \rr],
\eess
and
\bess
B_i(t,y)&=&\zeta_i(t)\eta'(w_i)z_{i,y}-\xi_i(t)y,\\[1mm] D_i(t,y)&=&-\zeta_i(t)\chi(w_i)z_{i,yy}-a+\dd\frac{bz_i}{c+w_i+mz_i},
\eess
$i=1,2$.
Evidently, $\zeta_1,B_1,D_1\in L^\yy(\dt)$. Notice \eqref{28a}. Employing the parabolic $L^p$ theory and embedding theorem we derive
\begin{align}
\|w\|_{C^{\frac{1+\ap}2,1+\ap}(\dt)}\le& C_{10}\|E\|_{L^p(\dt)}\nonumber\\
\le& C_{10}\big(\|w_{2,yy}(\zeta_1-\zeta_2)\|_{L^p(\dt)} +\|w_{2,y}(B_1-B_2)\|_{L^p(\dt)}
+\|w_2(D_1-D_2)\|_{L^p(\dt)}\big)\nonumber\\
\le& C_{11}\big(\|h\|_{C^1([0,T])}+\|w\|_{L^p(\dt)}+\|z\|_{W^{1,2}_p(\dt)}\big)\nonumber\\
\le& C_{12}\big(\|h\|_{C^1([0,T])}+\|w,\,z\|_{L^p(\dt)}\big),\nonumber
\end{align}
where the constants $C_{10},C_{11},C_{12}$ are independent of $T$. Notice that $w(0,y)=0$, it follows that
\begin{align}
\|w\|_{C^{\frac{\ap}2,0}(\dt)}=& \|w-w_0\|_{C([0,T])}+[w-w_0]_{C^{\frac{\ap}2,0}([0,T])} \nonumber \\
\le& \|w\|_{C^{\frac{\ap}2,0}(\dt)}T^{\frac{\ap}2}+\|w\|_{C^{\frac{1+\ap}2,0}(\dt)}T^{\frac{1}2} \nonumber\\
\le& \|w\|_{C^{\frac{1+\ap}2,1+\ap}(\dt)}(T^{\frac{\ap}2}+T^{\frac{1}2})\nonumber\\
\le& 2C_{12}T^{\frac{\ap}2}(\|h\|_{C^1([0,T])}+\|w,\,z\|_{L^p(\dt)}).\lbl{2.15}
\end{align}

Take the difference of the equations for $h_1,h_2$ results in
\[ h'(t)=\mu\kk(  \frac{1}{h_2}z_{2,y}(t,1)-\frac{1}{h_1}z_{1,y}(t,1) \rr),\ \ 0\le t\le T;\ h(0)=0. \]
Therefore
\begin{align*}
 \|h'(t)\|_{C^{\frac{\ap}2}([0,T])}&=\mu\|h_2^{-1}z_{2,y}(t,1)-h_1^{-1}z_{1,y}(t,1)\|_{C^{\frac{\ap}2}([0,T])} \\
 &\le \mu\|h_1^{-1}z_y\|_{C^{\frac{\ap}2,0}(\dt)}+\mu\|z_{2,y}(h_1^{-1}-h_2^{-1})\|_{C^{\frac{\ap}2,0}(\dt)} \\
 &\le C_{13}(\|h\|_{C^1([0,T])}+\|z\|_{L^p(\dt)}),
\end{align*}
where $C_{13}$ is independent of $T$. Thanks to $h(0)=h'(0)=0$, we have
 \[\|h\|_{C^1([0,T])}\le2T^{\frac{\ap}2}\|h'(t)\|_{C^{\frac{\ap}2}([0,T])}
 \le 2C_{13}T^{\frac{\ap}2}(\|h\|_{C^1([0,T])}+\|z\|_{L^p(\dt)}). \]
Combined this with \eqref{2.13} and \eqref{2.15} allows us to derive
\begin{align*}
\|w,\,z\|_{C^{\frac{\ap}2,0}(\dt)}+\|h\|_{C^1([0,T])}
\le& C_{14}T^{\frac{\ap}2}\big(\|h\|_{C^1([0,T])}+\|w,\,z\|_{L^p(\dt)}\big)\\
\le& C_{15}T^{\frac{\ap}2}\big(\|h\|_{C^1([0,T])}+\|w,\,z\|_{C^{\frac{\ap}2,0}(\dt)}\big),
\end{align*}
where $C_{14},C_{15}$ are independent of $T$. Thus, when $0<T\ll 1$, we have $w=z=0$ and $h=0$, i.e., $w_1=w_2,z_1=z_2$ and $h_1=h_2$.

Recalling the transformation at the beginning of the proof, we thus conclude that, for $T>0$ small enough, the problem \eqref{1.1} admits a unique classical solution $(u,v,h)$, i.e., \eqref{2.1} holds. Moreover, from the proof we also see that $u,v>0$ in $(0,T]\times[0,h(t))$, $h'(t)>0$ in $(0,T]$ and \eqref{2.2} holds.

{\it Step 3: The bounds}. Let $0<\tau<\infty$ and $(u,v,h)\in [C^{1+\frac{\alpha}2,2+\alpha}(D_{\tau})]^2\times C^{1+\frac{1+\alpha}2}([0,\tau])$ be the unique solution of \eqref{1.1}. It follows from the maximum principle that $u,v>0$ in $(0,\tau]\times[0,h(t))$ and $h'(t)>0$ in $(0,\tau]$. By \eqref{1.1} and the nonnegativity of $u$ and $v$, we can see that $v$ satisfies
 \bess
 \left\{\begin{array}{lll}
 v_t-dv_{xx}=g(u,v)\le v(q-v),\ &0<t\leq \tau, \ \ 0<x<h(t),\\[1mm]
 v_x(t,0)=v(t,h(t))=0,\ \ \ &0\leq t\le \tau,\\[1mm]
 v(0,x)=v_0(x),\ \ &0\leq x\leq h_0.
 \end{array}\right.
 \eess
It follows that
\bes
  v\leq \max\{q,\|v_0\|_{C([0,h_0])}\}:=M_2\ \ \ {\rm on}\ \ [0, \tau]\times[0, h(t)]. \label{2.8}
 \ees

Next, we prove that there exists $M_3>0$ such that $h'(t)\leq M_3$ for $t\in (0,\tau]$. Let $K$ be a positive constant and
 \[Q_K:=\{0<t<\tau, \ h(t)-1/K<x<h(t)\}.\]
Clearly, $Q_K$ is well defined if we set $K\ge1/{h_0}$. Introducing an auxiliary function
\[ w(t,x)= M_2[2K(h(t)-x)-K^2(h(t)-x)^2].\]
The number $K$ will be chosen to ensure that $w\geq v$ in $Q_K$. By straightforward calculation, we obtain
\[ w_t= 2M_2K h'(t)[1-K(h(t)-x)]\geq0,\ \ w_{xx}=-2M_2K^2. \]
It follows from \eqref{2.8} that $g(u,v)\leq qM_2$. Hence,
\[ w_t-dw_{xx}\geq 2dM_2K^2\geq qM_2\ge g(u,v)\ \ {\rm in}\ \, Q_K\]
provided $K\geq\sqrt{\frac{q}{2d}}$. It is easy to see that
\[w(t,h(t)-{1}/K)=M_2\geq v(t,h(t)-1/K),\ \ w(t,h(t))=0=v(t,h(t)).\]
Note that
\[v_0(x)=\int_x^{h_0} v_0'(y){\rm d}y\leq (h_0-x)\|v_0'\|_{C([0,h_0])},\ \ \forall x\in[h_0-1/K,h_0]\]
and
\[w(0,x)=M_2[2K(h_0-x)-K^2(h_0-x)^2]\geq M_2K(h_0-x),\ \ \forall x\in[h_0-1/K,h_0].\]
Therefore, if $M_2K\geq \|v_0'\|_{C([0,h_0])}$, then
 \[ v_0(x)\leq (h_0-x)\|v_0'\|_{C([0,h_0])}\leq w(0,x),\ \ \forall x\in[h_0-1/K,h_0].\]

Let
 \[K=\max\left\{\frac{1}{h_0},\ \sqrt{\frac{q}{2d}}, \
 \frac{\|v_0'\|_{C([0,h_0])}}{M_2} \right\}.\]
Applying the maximum principle to $w-v$ over $Q_K$ yields that $v(t,x)\leq w(t,x)$
for $(t,x)\in Q_K$. It follows that $v_x(t,h(t))\geq w_x(t,h(t))=-2M_2K$. Hence,
 \[h'(t)=-\mu v_x(t,h(t))\leq 2\mu M_2K:=M_3 \ \ \ {\rm on} \ \ [0,\tau].\]

Finally, we show that
 \[u\leq\max\kk\{ u_m, \ \|u_0\|_\infty, \ \frac{(b-am)M_2}a-c \rr\}:=M_1\ \ \ {\rm on}\ \ [0, \tau]\times[0, h(t)]. \]
Assume on the contrary that there is $(t_0,x_0)\in(0,T]\times[0,h(t))$ fulfilling $u(t_0,x_0)=\dd\max_{[0,T]\times[0,h(t)]}u>M_1$. By \eqref{1.1}, $u$ satisfies
 \bes
 \left\{\begin{array}{lll}
 u_t-u_{xx}+u_x\chi(u)v_x+u\chi'(u)u_xv_x+u\chi(u)v_{xx}\\[1.5mm]
 \qquad=u\dd\frac{(b-am)v-ac-au}{c+u+mv}, \ \qquad 0<t\leq T,\ \ 0<x<h(t),\\[1.5mm]
 u_x(t,0)=u(t,h(t))=0,\quad\qquad \qquad  0\leq t\leq T,\\[1mm]
  u(0,x)=u_0(x),\ \ v(0,x)=v_0(x),\quad 0\leq x\leq h_0.
 \end{array}\right.\label{212a}
 \ees
We first assert that $x_0\notin(0,h(t))$. Assume on the contrary that $x_0\in(0,h(t))$. Then $u_t\ge0, u_x=0,u_{xx}\le0$ at $(t_0,x_0)$ and $\chi(u(t_0,x_0))=\chi'(u(t_0,x_0))=0$ because of $u(t_0,x_0)>M_1\ge u_m$. Thus
 \[ u_t-u_{xx}+u_x\chi(u)v_x+u\chi'(u)u_xv_x+u\chi(u)v_{xx}\ge 0\ \ {\rm at}\ (t_0,x_0). \]
And so $(b-am)v-ac-au\ge 0$ at $(t_0,x_0)$. This yields
  \[aM_1<au(t_0,x_0)\le (b-am)v(t_0,x_0)-ac, \]
which is a contradiction and hence $x_0\notin(0,h(t))$. Therefore, $x_0=0$. However, by Hopf boundary lemma we see that $u_x(t,0)<0$, which is impossible. The proof is complete.
\end{proof}

The next theorem guarantees that the solution of \eqref{1.1} exist globally.

\begin{theo}\label{t2.2}\, Assume that $\chi(u)$, and $(u_0,v_0)$ satisfy the conditions \eqref{1.2} and \eqref{1.3}, respectively. Then the solution $(u,v,h)$ of problem \eqref{1.1} exists and is unique for all $t>0$. Moreover, the unique global solution $(u,v,h)$ of \eqref{1.1} satisfies
  \bess
(u,v,h)\in C^{1+\frac{\alpha}2,2+\alpha}(D_\infty)\times  C^{1+\frac{\alpha}2,2+\alpha}(D_\infty)\times C^{1+\frac{\alpha}2}([0,\infty)),
 \eess
where $D_\infty:= [0,\infty)\times[0,h(t)]$.
\end{theo}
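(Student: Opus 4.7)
My approach is a standard continuation argument built on the a priori bounds already established. Let $T_{\max}\in(0,\infty]$ denote the supremum of times $T$ for which Theorem \ref{t2.1} furnishes a unique classical solution $(u,v,h)$ of \eqref{1.1} on $[0,T]$ in the regularity class \eqref{2.1}. I will prove $T_{\max}=\infty$ by contradiction; suppose $T_{\max}<\infty$.

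First, the key point is that the constants $M_1,M_2,M_3$ in Theorem \ref{t2.1}(ii) are independent of $\tau$, so the bounds $0<u\le M_1$, $0<v\le M_2$, $0<h'(t)\le M_3$ persist on $[0,T_{\max})$. It follows that $h_0\le h(t)\le h_0+M_3T_{\max}$ there, and after the change of variables $y=x/h(t)$ used in the proof of Theorem \ref{t2.1}, the coefficients $\zeta(t)=h(t)^{-2}$ and $\xi(t)=h'(t)/h(t)$ are bounded in $L^\infty(0,T_{\max})$; likewise, the compositions $\eta(w)$, $\eta'(w)$, $\chi(w)$ and the reaction terms $f(w,z),g(w,z)$ are bounded on $\Delta_{T_{\max}}$, thanks to \eqref{1.2} and the $L^\infty$ bounds on $u,v$.

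Next, I upgrade these $L^\infty$ bounds to uniform H\"older estimates on $\Delta_{T_{\max}}$ by a two-step bootstrap very close to the one carried out in Step 2 of Theorem \ref{t2.1}. Apply the parabolic $L^p$ theory (for large $p$) to the $z$-equation of \eqref{21a} to obtain $\|z\|_{W^{1,2}_p(\Delta_{T_{\max}})}\le C$; Sobolev embedding then yields $z,z_y\in C^{\ap'/2,\ap'}(\Delta_{T_{\max}})$ for some $\ap'\in(0,1)$. Inserting this into the $w$-equation, where the prey-taxis coefficient $\zeta(t)\chi(w)z_{yy}$ is now $L^p$ and $\zeta(t)\eta'(w)z_y-\xi(t)y$ is H\"older, the same theory gives $\|w\|_{W^{1,2}_p(\Delta_{T_{\max}})}\le C$ and hence $w,w_y\in C^{\ap'/2,\ap'}$. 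A second pass using Schauder theory then upgrades the bound to $\|w,z\|_{C^{1+\ap/2,2+\ap}(\Delta_{T_{\max}})}\le C$, and the free boundary relation $h'(t)=-\mu z_y(t,1)/h(t)$ yields $\|h\|_{C^{1+(1+\ap)/2}([0,T_{\max}])}\le C$.

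Finally, I extend the solution past $T_{\max}$. Choose $t_0\in(0,T_{\max})$ and treat $(u(t_0,\cdot),v(t_0,\cdot),h(t_0))$ as new initial data; by the previous step these data obey \eqref{1.3} with norms bounded uniformly in $t_0$, and the analog of $h^*$, namely $-\mu v_x(t_0,h(t_0))$, is likewise uniformly bounded. Theorem \ref{t2.1}(i) then produces a local solution on $[t_0,t_0+T']$ with $T'>0$ depending only on these uniform constants, hence independent of $t_0$. Picking $t_0$ so that $T_{\max}-t_0<T'$ and invoking the uniqueness part of Theorem \ref{t2.1} to glue at $t_0$ yields an extension of $(u,v,h)$ past $T_{\max}$, contradicting maximality. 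Consequently $T_{\max}=\infty$; the claimed global regularity follows since the Schauder bound of the previous paragraph works on every $\Delta_T$ with $T<\infty$. The main technical obstacle is the prey-taxis term $(u\chi(u)v_x)_x$, whose expansion $\eta'(w)z_yw_y+\eta(w)z_{yy}$ is a genuine cross-coupling; the boundedness and Lipschitz continuity built into \eqref{1.2} are precisely what make $\eta,\eta',\chi$ compatible with the bootstrap so that this coupling can be absorbed as a lower-order perturbation.
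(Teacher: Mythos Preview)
Your proposal is correct and follows essentially the same route as the paper's proof: a priori $L^\infty$ bounds from Theorem \ref{t2.1}(ii), the straightening change of variables $y=x/h(t)$, an $L^p$--then--Schauder bootstrap (first on $z$, then on $w$, exploiting that $z_{yy}$ feeds into the $w$-equation only as an $L^p$ forcing at the first pass and as a H\"older term after the $z$-Schauder step), and finally a continuation argument using the local existence time from Theorem \ref{t2.1}(i). The only cosmetic difference is ordering: the paper explicitly derives $\|h'\|_{C^{\alpha/2}}\le C(T_0)$ \emph{before} invoking Schauder theory (since the H\"older regularity of $\zeta,\xi$ is needed there), whereas you mention it afterward; but this is a matter of presentation, not substance.
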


\begin{proof}Let $T_0$ be the maximal existence time. On the contrary we assume that $T_0<\infty$.
The same as the proof of Theorem \ref{t2.1}, we set
\[y=x/h(t),\ \ w(t,y)=u(t,h(t)y),\ \ z(t,y)=v(t,h(t)y).\]
Then, for any $0<T<T_0$, $(w,z,h)$ satisfies
 \bes
 \left\{\begin{array}{lll}
 w_t-\zeta(t)w_{yy}-\xi(t)yw_y+\zeta(t)\eta'(w)z_y w_y\\[1mm]
 \quad=-\zeta(t)\eta(w)z_{yy}+f(w,z),\ \ &0<t\leq T, \ \ 0<y<1,\\[1mm]
 w_y(t,0)=w(t,1)=0,\ \ \ &0\leq t\leq T,\\[1mm]
 w(0,y)=u_0(h_0y),\ \ \ &0\leq y\leq 1,
 \end{array}\right.\label{2.10}
\ees
and
 \bes
 \left\{\begin{array}{lll}
 z_t-d\zeta(t)z_{yy}-\xi(t)yz_y=g(w,z),\ &0<t\leq T, \ \ 0<y<1,\\[1mm]
 z_y(t,0)=z(t,1)=0,\ \ \ &0\leq t\le T,\\[1mm]
 z(0,y)=v_0(h_0y),\ \ \ &0\leq y\leq 1,
 \end{array}\right.\label{2.11}
\ees
and
\bes
h'(t)=-\mu \frac{1}{h(t)}z_y(t,1),\ \ 0< t\leq T;\ \ h(0)=h_0.\label{2.12}
\ees
According to Theorem \ref{t2.1} (ii),
\[  0<\zeta(t)\leq \frac{1}{h_0^2},\ \ 0\leq\xi(t)\leq \frac{M_3}{h_0},\ \
\|g(w,z)\|_{L^\infty(\dt)}\leq C, \]
where $C$ is independent of $T$. Apply the parabolic $L^p$ theory to \eqref{2.11} yields that
\bes
\|z\|_{W_p^{1,2}(\dt)}\le C(T)(\|g(w,z)\|_{L^p(\dt)}+\|v_0\|_{L^p((0,h_0))})\leq C(T),\label{213a}
\ees
Notice that the respective constant on the most right-hand side of \eqref{213a} depends on $T$ only through an upper bound for $T$. Therefore, we have
\[ \|z\|_{W_p^{1,2}(\dt)}\leq C(T_0). \]
Then the embedding theorem gives $\|z_y\|_{L^\infty(\dt)}
\leq C(T_0)$. Therefore
 \[ \|\zeta(t)\eta'(w)z_y\|_{L^\infty(\dt)}\leq C(T_0),\ \
 \| -\zeta(t)\eta(w)z_{yy}+f(w,z) \|_{L^p(\dt)}\leq C(T_0). \]
Now we can apply the parabolic $L^p$ theory to \eqref{2.10} to derive
$$\|w\|_{W_p^{1,2}(\dt)}\leq C(T)(\| -\zeta(t)\eta(w)z_{yy}+f(w,z) \|_{L^p(\dt)}+\|u_0\|_{W^2_p((0,h_0))})\le C(T_0).$$
For any fixed $\alpha\in(0,1)$, we take $p$ large enough such that $1-3/p>\alpha$.
Then $\|w,z\|_{C^{(1+\alpha)/2,1+\alpha}(\dt)}\leq C(T_0)$ by the embedding theorem. Hence,
$\|g(w,z)\|_{C^{\alpha/2,\alpha}(\dt)}\leq C(T_0)$. Moreover, from \eqref{2.12}, we have
$\|h'\|_{C^{\alpha/2}([0,T])}\leq C(T_0)$, which implies $\|\zeta,\,\xi\|_ {C^{\alpha/2}([0,T])}
\leq C(T_0)$. Using the parabolic Schauder theory for \eqref{2.11} we find
 \[ \|z\|_{C^{1+\alpha/2,2+\alpha}(\dt)}\le C(T)(\|g(w,z)\|_{C^{\alpha/2,\alpha}(\dt)}+\|v_0\|_{C^{2+\ap}([0,h_0])})\le C(T_0). \]
It follows that
 \[ \|f(w,z)-\zeta(t)\eta(w)z_{yy}\|_{C^{\alpha/2,\alpha}(\dt)}\leq C(T_0). \]
Now, we can apply the parabolic Schauder theory to \eqref{2.10} to get
 \[ \|w\|_{C^{1+\alpha/2,2+\alpha}(\dt)}\le C(t)(\|f(w,z)-\zeta(t)\eta(w)z_{yy}\|_{C^{\alpha/2,\alpha}(\dt)}+\|u_0\|_{C^{2+\ap}([0,h_0])})\le C(T_0). \]

Recalling the transformation, we conclude that
\bes
\|u,v\|_{C^{1+\alpha/2,2+\alpha}([0,T]\times[0,h(t)])}\leq C(T_0)\ \ \ {\rm for}\ 0<T<T_0,
\lbl{2.21}\ees
and hence
\bes
 h(t)>h_0, \ \  \|h\|_{C^{1+\frac{1+\ap}2}([0,T])}\le C(T_0)\ \ \ {\rm for}\ 0<T<T_0.
 \lbl{2.22}\ees
Take $t_0\in (0,T_0)$ as a new initial time. Based on the proof of Theorem \ref{t2.1}, we can choose
a positive constant $\tau$, which depends only on the upper bound of $\|(u,v)(t_0,x)\|_{C^{2+\alpha}([0,h(t_0)])}$, $h(t_0)$ and $h'(t_0)$, and thus depends only on $T_0$ by \eqref{2.21} and \eqref{2.22}, such that the solution $(u,v,h)$ can be extended to $[0,t_0+\tau]\times[0,h(t)]$. If we
set $t_0<T_0$ such that $t_0+\tau>T_0$, then a contradiction is obtained.
\end{proof}

\section{Regularity and estimates}

Theorem \ref{t2.1} asserts that $h(t)$ is monotonic increasing. Thereby,
$\dd\lim_{t\to\infty}h(t)=h_\infty$ where $h_\infty\in(0,\infty]$. This section provides an
estimate for $(u,v,h)$ which plays an essential role in the sequel.

\begin{theo}\label{t3.1}\, Let $(u,v,h)$ be the unique global solution of \eqref{1.1} and
$D_\infty:=[0,\infty)\times[0,h(t)]$. Then there is a positive constant $C$ such that
\bes
\|u\|_{C^{1+\frac\alpha2,2+\alpha}(D_\infty)}\leq C, \ \
\|v\|_{C^{1+\frac\alpha2,2+\alpha}(D_\infty)}\leq C. \label{3.1}
\ees
The second estimate in \eqref{3.1} implies
\bes
\|h'\|_{C^{\frac{1+\alpha}2}(\bar {\mathbb{R}}_+)}\leq C. \label{3.2}
\ees
\end{theo}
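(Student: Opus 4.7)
The objective is to upgrade the local-in-time Schauder bounds of Theorem \ref{t2.2} (whose constants depend on an upper bound for $T_0$) to bounds that are uniform on the whole cylinder $D_\infty$. The key inputs are the uniform $L^\infty$ bounds of Theorem \ref{t2.1}(ii): $0<u\le M_1$, $0<v\le M_2$, $h(t)\ge h_0$, and $0<h'(t)\le M_3$. My plan is to work in the straightened coordinates $y=x/h(t)$, $w(t,y)=u(t,h(t)y)$, $z(t,y)=v(t,h(t)y)$, so that $(w,z,h)$ satisfies \eqref{2.10}--\eqref{2.12} on $[0,\infty)\times[0,1]$. Because $h\ge h_0$ and $h'\le M_3$, the coefficients $\zeta(t)=h^{-2}(t)$ and $\xi(t)=h'(t)/h(t)$ lie in bounded intervals, and the nonlinearities $f,g,\chi,\eta,\eta'$ evaluated at $(w,z)$ are uniformly bounded by the $L^\infty$ bounds on $u,v$.

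I then run a sliding time-window argument. For each integer $n\ge 1$, take a smooth cutoff $\phi_n(t)$ with $\phi_n\equiv 0$ on $[0,n-1]$ and $\phi_n\equiv 1$ on $[n,\infty)$. The function $\phi_n z$ solves the same linear $z$-equation but with zero initial data at $t=n-1$ and an $L^\infty$-bounded right-hand side $\phi_n g(w,z)+\phi_n' z$. Applying parabolic $L^p$ theory on $[n-1,n+1]\times[0,1]$ yields $\|z\|_{W^{1,2}_p([n,n+1]\times[0,1])}\le C$ with $C$ independent of $n$. Choosing $p$ large so that $1-3/p>\alpha$ and invoking Sobolev embedding gives a uniform $C^{(1+\alpha)/2,1+\alpha}$ bound on $z$, hence a uniform $L^\infty$ bound on $z_y$ and a uniform $L^p$ bound on $z_{yy}$. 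Consequently $\zeta\eta'(w)z_y$ and $-\zeta\eta(w)z_{yy}+f(w,z)$ in \eqref{2.10} are controlled, and the same cutoff-in-time $L^p$ argument applied to the $w$-equation produces a uniform $C^{(1+\alpha)/2,1+\alpha}$ bound for $w$. At this stage every coefficient and source term of \eqref{2.10}--\eqref{2.11} sits uniformly in $C^{\alpha/2,\alpha}$, so a final sliding-window application of parabolic Schauder theory (again with a time cutoff to kill initial data) upgrades the estimate to uniform $\|w,z\|_{C^{1+\alpha/2,2+\alpha}}\le C$. Reversing the transformation, together with the uniform bounds on $h$ and $h'$, gives \eqref{3.1}. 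Then \eqref{3.2} follows from the free boundary condition $h'(t)=-\mu v_x(t,h(t))$: the uniform $C^{1+\alpha/2,2+\alpha}$ bound on $v$ forces its trace along $x=h(t)$ to lie in $C^{(1+\alpha)/2}([0,\infty))$.

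The main obstacle is twofold. First, the cross-diffusion term $(u\chi(u)v_x)_x$ couples second derivatives of $v$ into the $u$-equation, so no direct parabolic estimate closes on $u$ alone; the bootstrap must be staged, using $L^p$ theory for $z$ first (which needs only the already-available $L^\infty$ information) and only afterward closing the $w$-estimate. Second, when $h_\infty=\infty$ the coefficient $\zeta(t)=h^{-2}(t)\to 0$, so the straightened equation is not uniformly parabolic and the constants in the parabolic estimates above could a priori deteriorate with $n$. I would handle this by localizing the straightening to a fixed-width boundary strip via $\tilde x=x-h(t)+\delta$ on $[h(t)-\delta,h(t)]$, which preserves the original diffusion coefficient there, and combining it with standard interior Schauder estimates on compact spatial windows away from the free boundary and near $x=0$.
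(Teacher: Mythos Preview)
Your proposal is correct and matches the paper's argument in all essential respects: the paper also straightens the boundary, runs the $L^p$-then-Schauder bootstrap on $z$ first and then $w$ via overlapping time windows (using time shifts $z^i(t,y)=z(t+i,y)$ rather than cutoffs, which is equivalent), and handles the degeneration $\zeta(t)\to 0$ when $h_\infty=\infty$ exactly as you suggest---by working in original coordinates on interior spatial windows and using the shift $y=h(t)-x$ near the free boundary. The only organizational difference is that the paper splits into the cases $h_\infty<\infty$ and $h_\infty=\infty$ from the outset, whereas you present the straightened-coordinate argument first and flag the fix for the second case at the end.
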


\begin{proof} The idea of this proof comes from \cite[Theorem 2.1]{Wjde15},
\cite[Theorem 2.1]{Wjfa16} and \cite[Theorem 2.2]{WZppd}. In the following arguments we always take $p>3/(1-\alpha)$.

By Theorems \ref{t2.1} and \ref{t2.2}, the unique global solution $(u,v,h)$ of \eqref{1.1} satisfies
\bes
(u,v,h)\in C^{1+\frac{\alpha}2,2+\alpha}(D_\infty)\times C^{1+\frac{\alpha}2,2+\alpha}(D_\infty)\times C^{1+\frac{\alpha}2}({\mathbb{R}}_+). \label{3.3}
\ees
The same as the proof of Theorem \ref{t2.2} we define
\[y=x/h(t),\ \ w(t,y)=u(t,h(t)y),\ \ z(t,y)=v(t,h(t)y).\]
Then $(w,z,h)$ satisfies \eqref{2.10},  \eqref{2.11} and \eqref{2.12} for all $T>0$. Denote $Q_1:=[0,3]\times[0,1]$. Using Theorem \ref{t2.1} we infer that
\[  0<\zeta(t)\leq \frac{1}{h_0^2}, \ \ 0\leq\xi(t)\leq \frac{M_3}{h_0}, \ \
\|g(w,z)\|_{L^\infty([0,3]\times[0,1])}\leq C. \]
Applying the parabolic $L^p$ theory to \eqref{2.11} firstly and using the embedding theorem secondly, we have
\bes
\|z\|_{W_p^{1,2}(Q_1)}+\|z\|_{C^{\frac{1+\alpha}2,1+\alpha}(Q_1)}\leq C. \label{3.6}
\ees
This yields
\[ \|\zeta(t)\eta'(w)z_y\|_{L^\infty(Q_1)}+\|-\zeta(t)\eta(w)z_{yy}+f(w,z)\|_{L^p(Q_1)}\leq C. \]
Again, we apply the parabolic $L^p$ estimate to \eqref{2.10} and embedding theorem to get
\bes
\|w\|_{C^{\frac{1+\alpha}2,1+\alpha}(Q_1)}\leq C. \label{3.6+}
\ees

{\it Case 1:} $h_\infty<\infty$. For the integral $i\geq 0$, we denote
 \bess
 &z^i(t,y)=z(t+i,y), \ \ \ w^i(t,y)=w(t+i,y),&\\[1mm]
 &h^i(t)=h(t+i), \ \ \zeta^i(t)=\zeta(t+i), \ \ \xi^i(t)=\xi(t+i).\eess
 Then $w^i(t,y)$ and $z^i(t,y)$ satisfies
\bes
 \left\{\begin{array}{lll}
 w^i_t-\zeta^i(t)w^i_{yy}-\xi^i(t)yw^i_y+\zeta^i(t)\eta'(w^i)z^i_y w^i_y\\[1mm]
 \qquad=-\zeta^i(t)\eta(w^i)z^i_{yy}+f^i,\ \ &0<t\leq 4, \ \ 0<y<1,\\[1mm]
 w^i_y(t,0)=w^i(t,1)=0,\ \ \ &0\leq t\leq 4,\\[1mm]
 w^i(0,y)=u(i,h(i)y),\ \ \ &0\leq y\leq 1,
 \end{array}\right.\label{3.7-}
\ees
 and
\bes
 \left\{\begin{array}{lll}
 z^i_t-d\zeta^i(t)z^i_{yy}-\xi^i(t)yz^i_y=g^i,\ \ &0<t\leq 4, \ \ 0<y<1,\\[1mm]
 z^i_y(t,0)=z^i(t,1)=0,\ \ \ &0\leq t\leq 4,\\[1mm]
 z^i(0,y)=v(i,h(i)y),\ \ \ &0\leq y\leq 1,
 \end{array}\right.\label{3.7}
\ees
where $f^i=f(w^i,z^i)$, $g^i=g(w^i,z^i)$. In view of \eqref{2.5}-\eqref{2.7}
we deduce that functions $w^i,z^i,\zeta^i,\xi^i,f^i$ and $g^i$ are uniformly bounded and the modulus of continuity
 \[ \rho^i:= \underset{0\leq t,s\leq 4,|t-s|\leq \delta}\max|\zeta^i(t)-\zeta^i(s)|\leq
 \frac{2M_3}{h_0^3}\delta \to 0\ \ {\rm as}\ \ \delta\to 0 \]
uniformly on $i$. Similarly to the arguments of \cite[Theorem 2.1]{Wjfa16}
we can obtain
 \bes
 \|z^i\|_{W^{1,2}_p([1,4]\times[0,1])}+\|z^i\|_{C^{\frac{1+\alpha}2,1+\alpha}([1,4]\times[0,1])}\leq C, \ \ \forall \ i\ge 0, \label{3.8}
  \ees
and
 \bess
 \|v\|_{C^{\frac{1+\alpha}2,1+\alpha}(D_\infty)}\leq C.
 \eess
The last estimate implies
 \bes
\|h'\|_{C^{\frac{\alpha}2}(\bar {\mathbb{R}}_+)}\leq C. \lbl{3.10}
 \ees

By \eqref{3.8}, we have
 \[\|\zeta^i\eta'(w^i)z^i_y\|_{L^\yy([1,4]\times[0,1])}+ \|\zeta^i\eta(w^i)z^i_{yy}\|_{L^p([1,4]\times[0,1])}\le C,\ \ \forall \ i\ge 0. \]
Hence we can apply the interior $L^p$ estimate (\cite[Theorems 7.30 and 7.35]{Lie})
to \eqref{3.7-} to deduce that there exists a constant $C>0$ such that
$\|w^i\|_{W_p^{1,2}([2,4]\times[\frac{1}2,1])}\leq C$ and $\|w^i\|_{W_p^{1,2}([2,4]
\times[0,\frac{1}2])}\leq C$ for integer $i\ge 0$.
Thus we have
 \[\|w\|_{C^{\frac{1+\alpha}2,1+\alpha}(B_i)}\leq C,\ \ \forall \ i\ge 0, \]
where $B_i=[i+2,i+4]\times[0,1]$. Since these rectangles $B_i$ overlap and $C$ is independent
of $i$, it follows that $\|w\|_{C^{\frac{1+\alpha}2,1+\alpha}([2,\infty)\times[0,1])}\leq C$.
This in conjunction with \eqref{3.6+} yields
 \bes
 \|w\|_{C^{\frac{1+\alpha}2,1+\alpha}([0,\infty)
\times[0,1])}\leq C.\lbl{3.10a}\ees

Thanks to \eqref{3.8}, \eqref{3.10} and \eqref{3.10a}, we see that functions $\zeta^i,\xi^i$ and $g^i$ are H\"{o}lder continuous uniformly in $i$, i.e., there exists a
positive constant $C$ independent of $i$ such that
 \[\|\zeta^i,\xi^i\|_{C^{\alpha/2}([0,4])}\le C, \ \ \
 \|g^i\|_{C^{\alpha/2,\,\alpha}([0,4]\times[0,1])}\le C.\]
Using the global Schauder estimate for $i=0$ and the interior Schauder estimate for $i\ge 1$ (\cite{L-S-Y1968}), we obtain
\bes
 \|z\|_{C^{1+\frac{\alpha}2,2+\alpha}([0,4]\times[0,1])}&\leq& C\big(\|g\|_{C^{\frac{\alpha}2,\alpha}([0,4]\times[0,1])}+\|v_0\|_{C^{2+\alpha}([0,h_0])}\big)\leq C,\nonumber\\[1.5mm]
 \|z^i\|_{C^{1+\frac{\alpha}2,2+\alpha}([1,4]\times[0,1])}&\leq& C\big(\|g^i\|_{C^{\frac{\alpha}2,\alpha}([0,4]\times[0,1])}+\|z^i\|_\infty\big)\nonumber\\[1mm]
  &\leq& C\big(\|g^i\|_{C^{\frac{\alpha}2,\alpha}([0,4]\times[0,1])}+M_2\big) \leq C,\ \ \forall \ i\ge 0,  \lbl{3.11a}\ees
which entails that
 \[\|z\|_{C^{1+\frac{\alpha}2,2+\alpha}([0,\infty)\times[0,1])}\leq C. \]

The estimates \eqref{3.11a} and \eqref{3.8} show that $\zeta^i\eta'(w^i)z^i_y$, $\zeta^i\eta(w^i)z^i_{yy}$ and $f^i$ are H\"{o}lder continuous on $[1,4]\times[0,1]$ and uniformly with respect to $i\ge 1$, i.e., there exists a constant $C>0$ such that
 \[\|\zeta^i\eta'(w^i)z^i_y, \ \zeta^i\eta(w^i)z^i_{yy}, \ f^i\|_{C^{\alpha/2,\,\alpha}([1,4]\times[0,1])}\le C,\ \ \forall \ i\ge 1.\]
Similar to the above, we can apply the Schauder estimate to $w$ and $w^i$ and obtain
 \[\|w\|_{C^{1+\frac{\alpha}2,2+\alpha}([0,4]\times[0,1])} \leq C, \ \ \
 \|w^i\|_{C^{1+\frac{\alpha}2,2+\alpha}([2,4]\times[0,1])} \leq C\ \ {\rm for}\ i\ge1.\]
Therefore,
 \[\|w\|_{C^{1+\frac{\alpha}2,2+\alpha}([0,\infty)\times[0,1])}\leq C. \]
Take advantage of \eqref{3.10} and the relations:
 \bes
 u_t=w_t-\frac{h'}hyw_y, \ \ u_{xx}=\frac 1{h^2}w_{yy},\ \ v_t=z_t-\frac{h'}hyz_y, \ \ v_{xx}=\frac 1{h^2}z_{yy},
 \lbl{3.11}\ees
it is easily to derive the estimates \eqref{3.1}.

{\it Case 2:} $h_\infty=\infty$. For any fixed $i_1, i_2<\infty$, an argument similar to the one used in {\it Case 1} shows that
\bes
\|u\|_{C^{1+\frac\alpha2,2+\alpha}([0,i_1]\times[0,h(t)])}\leq C(i_1),\label{3.12}\\[1mm]
\|v\|_{C^{1+\frac\alpha2,2+\alpha}([0,i_2]\times[0,h(t)])}\leq C(i_2). \label{3.13}
\ees

For the integer $i\geq 0$, let
\[ u^i(t,x)=u(t+i,x),\ v^i(t,x)=v(t+i,x),\ h^i(t)=h(t+i).\]
Then $u^i, v^i$ satisfies
\bes
 \left\{\begin{array}{lll}
 u^i_t-u^i_{xx}+\eta'(u^i)v^i_xu^i_x=-\eta(u^i)v^i_{xx}+f(u^i,v^i),\ &0<t\leq7, \ \ 0<x<h^i(t),\\[1mm]
 u^i_x(t,0)=u^i(t,h^i(t))=0,\ \ \ &0\leq t\leq7,\\[1mm]
 u^i(0,x)=u(i,x),\ \ \ &0\leq x\leq h(i),
 \end{array}\right.\label{3.14}
\ees
and
 \bes
 \left\{\begin{array}{lll}
 v^i_t-dv^i_{xx}=g(u^i,v^i),\ \ &0<t\leq7, \ \ 0<x<h^i(t),\\[1mm]
 v^i_x(t,0)=v^i(t,h^i(t))=0,\ \ \ &0\leq t\leq7,\\[1mm]
 v^i(0,x)=v(i,x),\ \ \ &0\leq x\leq h(i).
 \end{array}\right.\label{3.15}
\ees
From Theorem \ref{t2.1}, $u^i,v^i,h^i$ are bounded uniformly on $i$, and
  \[ h^i(t)\leq h(i)+M_3t\leq h(i)+7M_3 \]
for all $0\leq t\leq 7$. Set $\sigma=7M_3$ and $\sigma_n=\sigma+n$ from now on. Then, it is easy to see that $h(i)\geq h^i(t)-\sigma$ for all $0\leq t\leq 7$. As $h_\infty=\infty$, there is an $i'\geq 0$ fulfilling $h(i')>\sigma+9$.

With \eqref{3.12} and \eqref{3.13} at hand, we only need to study $u^i, v^i$ with $i\geq i'$. Clearly, $h(i)>\sigma+9$ for all $i\geq i'$. Let us choose $p\gg 1$. For any integer $0\leq l \leq h(i)-9$, we can apply the interior $L^p$ estimate (\cite[Theorems 7.30 and 7.35]{Lie}) to the problem \eqref{3.15} and deduce that
\bes
 \|v^i\|_{W_p^{1,2}((1,7)\times(l,l+8))}+\|v^i\|_{C^{\frac{1+\ap}2,1+\ap}([1,7]\times[l,l+8])}\leq C,\ \ \forall\ 0\le l\le h(i)-9,  \label{3.16}
\ees
where $C>0$ is independent of $l$ and $i$. The same as the proof of \cite[Theorem 2.2]{WZppd}
we can get
 \bess
\|v\|_{C^{\frac{1+\alpha}2,1+\alpha}([i'+1,\infty]\times[0,h(t)-\sigma_2])}\leq C.
\eess
Set $Q_l'=(1,7)\times(l,l+8)$ and $Q_l''=(2,6)\times(l+1,l+7)$.
The estimate \eqref{3.16} implies
 \[\|v^i_x\|_{L^\infty(Q')}\le C, \ \ \ \|v^i_{xx}\|_{L^p(Q')}\le C,\ \ \forall \ i\ge i'.\]
When $l=0$, an application of interior $L^p$ estimate (\cite[Theorems 7.35]{Lie}) yields that
\bes
 \|u^i\|_{W_p^{1,2}((2,6)\times(0,7))}+\|u^i\|_{C^{\frac{1+\ap}2,1+\ap}([2,6]\times[0,7])}
 \leq C,\ \ \forall \ i\ge i'. \label{3.21a}
 \ees
When $l\ge1$, since $Q_l''\subset\subset Q_l'$, we are able to apply the interior $L^p$ estimate (\cite[Theorem 7.22]{Lie}) to the problem \eqref{3.14} to obtain
 \bes
\|u^i\|_{W_p^{1,2}((2,6)\times(l+1,l+7))}+\|u^i\|_{C^{\frac{1+\ap}2,1+\ap}([2,6]\times[l+1,l+7])}\leq C \label{3.22a}
 \ees
for some $C>0$ independent of $i$ and $l$. Use the arguments in the proof of \cite[Theorem 2.2]{WZppd}, it can be deduced that
\bess
\|u\|_{C^{\frac{1+\alpha}2,1+\alpha}([i'+2,\,\infty]\times[0,\,h(t)-\sigma_3])}\leq C.
\eess

Thanks to \eqref{3.16}, \eqref{3.21a} and \eqref{3.22a}, we see that functions $g(u^i,v^i)$ are H\"{o}lder continuous uniformly in $i$, i.e., there exists a
positive constant $C$ independent of $i$ such that
\[\|g(u^i,v^i)\|_{C^{\alpha/2,\,\alpha}([2,6]\times[0,7])}\le C,\]
and
 \[ \|g(u^i,v^i)\|_{C^{\alpha/2,\,\alpha}([2,6]\times[l+1,l+7])}\le C, \ \ \ l\ge1.\]
Using the interior Schauder estimate for $i\ge i'$ (\cite{L-S-Y1968}), we obtain
\bes
 &\|v^i\|_{C^{1+\frac{\alpha}2,2+\alpha}([3,6]\times[0,6])} \leq C(\|g(u^i,v^i)\|_{C^{\frac{\alpha}2,\alpha}([2,6]\times[0,7])}+M_2)\leq C,&\label{3.23a}\\[1.5mm]
 &\|v^i\|_{C^{1+\frac{\alpha}2,2+\alpha}([3,6]\times[l+2,l+6])} \leq C(\|g(u^i,v^i)\|_{C^{\frac{\alpha}2,\alpha}([2,6]\times[l+1,l+7])}+M_2) \leq C,\ \ \ l\ge1,&\label{3.24a}  \ees
which entails that
 \bes
 \|v\|_{C^{1+\frac{\alpha}2,2+\alpha}([i'+3,\infty)\times[0,h(t)-\sigma_4])}\leq C.\label{3.17a}
 \ees

The estimates \eqref{3.22a}, \eqref{3.23a} and \eqref{3.24a} show that $\eta'(u^i)v^i_x,\ -\eta(u^i)v^i_{xx}+f(u^i,v^i)$ are H\"{o}lder continuous on $[3,6]\times[0,6]\cup[3,6]\times[l+2,l+6]$ and uniformly with respect to $i\ge i',l\ge1$, i.e., there exists a constant $C>0$ such that
 \[\|\eta'(u^i)v^i_x, \ f(u^i,v^i)-\eta(u^i)v^i_{xx}\|_{C^{\alpha/2,\,\alpha}([3,6]\times[0,6])}\le C,\ \ \forall \ i\ge i',\]
 and
 \[\|\eta'(u^i)v^i_x, \ f(u^i,v^i)-\eta(u^i)v^i_{xx}\|_{C^{\alpha/2,\,\alpha}([3,6]\times[l+2,l+6])}\le C,\ \ \forall \ i\ge i',\ l\ge 1.\]
Similar to the above, we can apply the interior Schauder estimate to $u^i$ and obtain
 \[\|u^i\|_{C^{1+\frac{\alpha}2,2+\alpha}([4,6]\times[0,5])} \leq C, \ \ \
 \|u^i\|_{C^{1+\frac{\alpha}2,2+\alpha}([4,6]\times[l+3,l+5])} \leq C,\ \ \forall \ i\ge i', \ l\ge1.\]
Therefore,
\bes
\|u\|_{C^{1+\frac{\alpha}2,2+\alpha}([i'+4,\infty)\times[0,h(t)-\sigma_5])}\leq C. \label{3.18a}
\ees

Next, we shall show
\bess
\|u\|_{C^{1+\frac{\alpha}2,2+\alpha}([i'+4,\infty]\times[h(t)-\sigma_5,h(t)])}\leq C, \\
\|v\|_{C^{\frac{1+\alpha}2,1+\alpha}([i'+3,\infty]\times[h(t)-\sigma_4,h(t)])}\leq C.
\eess
Using the transformation
\[ y=h(t)-x,\ \phi(t,y)=u(t,h(t)-y),\ \psi(t,y)=v(t,h(t)-y) \]
we deduce that $\phi,\psi$ satisfies
\bess
 \left\{\begin{array}{lll}
 \phi_t-\phi_{yy}+h'(t)\phi_y+\eta'(\phi)\phi_y\psi_y=-\eta(\phi)\psi_{yy}+f(\phi,\psi),\ \ &0<t\le 7, \ \ 0<y<h(t),\\[1mm]
 \phi(t,0)=\phi_y(t,h(t))=0,\ \ \ &0\le t\le 7,\\[1mm]
 \phi(0,y)=u_0(h_0-y),\ \ \ &0\leq y\leq h_0,
 \end{array}\right.
\eess
and
 \bess
 \left\{\begin{array}{lll}
 \psi_t-d\psi_{yy}+h'(t)\psi_y=g(\phi,\psi),\ \ &0<t\le 7, \ \ 0<y<h(t),\\[1mm]
 \psi(t,0)=\psi_y(t,h(t))=0,\ \ \ &0\le t\le 7,\\[1mm]
 \psi(0,y)=v_0(h_0-y),\ \ \ &0\leq y\leq h_0.
 \end{array}\right.
\eess
For the integer $i\geq i'$, we let
\[\phi^i(t,y)=\phi(t+i,y),\ \psi^i(t,y)=\psi(t+i,y),\ h^i(t)=h(t+i).\]
Then $\phi^i, \psi^i$ satisfies
\bes
 \left\{\begin{array}{lll}
 \phi^i_t-\phi^i_{yy}+[(h^i(t))'+\eta'(\phi^i)\psi^i_y]\phi^i_y\\[1mm]
 \qquad=-\eta(\phi^i)\psi^i_{yy}+f(\phi^i,\psi^i),\ \ &0<t\leq 7, \ \ 0<y<h^i(t),\\[1mm]
 \phi^i(t,0)=\phi^i_y(t,h^i(t))=0,\ \ \ &0\leq t\leq 7,\\[1mm]
 \phi^i(0,y)=\phi(i,y)=u(i,h(i)-y),\ \ \ &0\leq y\leq h(i),
 \end{array}\right.\label{3.29a}
\ees
and
 \bes
 \left\{\begin{array}{lll}
 \psi^i_t-d\psi^i_{yy}+(h^i(t))'\psi^i_y=g(\phi^i,\psi^i),\ \ &0<t\leq 7, \ \ 0<y<h^i(t),\\[1mm]
 \psi^i(t,0)=\psi^i_y(t,h^i(t))=0,\ \ \ &0\leq t\leq 7,\\[1mm]
 \psi^i(0,y)=\psi(i,y)=v(i,h(i)-y),\ \ \ &0\leq y\leq h(i).
 \end{array}\right.\label{3.30a}
\ees
In view of Theorem \ref{t2.1}, we know that $\phi^i, \psi^i, (h^i(t))', f(\phi^i,\psi^i), g(\phi^i,\psi^i)$ are bounded uniformly on $i$. Set $\Gamma_i=[i+1,i+7]\times[0,\sigma_8]$. Recall
\[ h(i)-\sigma_8\geq h(i')-\sigma_8>1,\ \ \forall \ i\geq i', \ 0\leq t\leq 7. \]
Applying the interior $L^p$ estimate (\cite[Theorems 7.30]{Lie}) to \eqref{3.30a} firstly and the embedding theorem secondly, we have
\bes
 \|\psi^i\|_{W_p^{1,2}(\Gamma_0)}+\|\psi^i\|_{C^{\frac{1+\alpha}2,1+\alpha}(\Gamma_0)}
 \leq C,\ \ \forall \ i\geq i'.  \label{3.31a}
\ees
Accordingly,
\[  \|\psi\|_{C^{\frac{1+\alpha}2,1+\alpha}(\Gamma_i)}\leq C. \]
Since these rectangles $\Gamma_i$ overlap and $C$ is independent of $i$, it follows that
\bes
\|\psi\|_{C^{\frac{1+\alpha}2,1+\alpha}([i'+1,\infty]\times[0,\sigma_8])}\leq C. \label{3.32a}
\ees

Set $\Sigma_i=[i+2,i+6]\times[0,\sigma_7]$. In view of \eqref{3.31a} and $\sigma_8=\sigma_7+1$, we can apply interior $L^p$ estimate (\cite[Theorems 7.30]{Lie}) to \eqref{3.29a} firstly and embedding theorem secondly to get
\bes
 \|\phi^i\|_{W_p^{1,2}(\Sigma_0)}+\|\phi^i\|_{C^{\frac{1+\alpha}2,1+\alpha}(\Sigma_0)}\leq C,\ \ \forall \ i\geq i'. \label{3.31b}
\ees
It follows that
\bess
\|\phi\|_{C^{\frac{1+\alpha}2,1+\alpha}([i'+2,\infty]\times[0,\sigma_7])}\leq C.
\eess

From \eqref{3.31a} and \eqref{3.31b} we have
\bess
\|(h^i(t))'\|_{C^{\frac{\ap}2}([2,6])}+ \|g(\phi^i,\psi^i)\|_{C^{\frac{\ap}2,\ap}([2,6]\times[0,\sigma_7])}\le C,\ \ \forall \ i\geq i'.
\eess
This allows us to apply the interior Schauder estimate to \eqref{3.30a} for $i\ge i'$ (\cite{L-S-Y1968}) to obtain
\bes
\|\psi^i\|_{C^{1+\frac{\ap}2,2+\ap}([3,6]\times[0,\sigma_6])}\le C(\|g(\phi^i,\psi^i)\|_{C^{\frac{\ap}2,\ap}([2,6]\times[0,\sigma_7])}+M_2)\le C,\ \ \forall \ i\geq i',\label{3.37a}
\ees
which implies that
\bes
\|\psi\|_{C^{1+\frac{\ap}2,2+\ap}([i'+3,\yy)\times[0,\sigma_6])}\le C.\label{3.38a}
\ees
The estimates \eqref{3.31b} and \eqref{3.37a} show that functions $(h^i(t))'+\eta'(\phi^i)\psi^i_y$ and $f(\phi^i,\psi^i)-\eta(\phi^i)\psi^i_{yy}$ are H\"{o}lder continuous on $[3,6]\times[0,\sigma_6]$ and uniformly with respect to $i\ge i'$, i.e., there exists a constant $C>0$ independent of $i$ such that
\bess
\|(h^i(t))'+\eta'(\phi^i)\psi^i_y\|_{C^{\frac{\ap}2,\ap}([3,6]\times[0,\sigma_6])}+ \|f(\phi^i,\psi^i)-\eta(\phi^i)\psi^i_{yy}\|_{C^{\frac{\ap}2,\ap}([3,6]\times[0,\sigma_6])}\le C.
\eess
Hence by the interior Schauder estimate (\cite{L-S-Y1968}) we get that, for all $i\ge i'$,
\bess
\|\phi^i\|_{C^{1+\frac{\ap}2,2+\ap}([4,6]\times[0,\sigma_5])}\le C\kk(\|f(\phi^i,\psi^i)-\eta(\phi^i)\psi^i_{yy}\|_{C^{\frac{\ap}2,\ap}([3,6]\times[0,\sigma_6])}+M_1\rr)\le C,
\eess
which yields
\bes
\|\phi\|_{C^{1+\frac{\ap}2,2+\ap}([i'+4,\yy)\times[0,\sigma_5])}\le C.\label{3.39a}
\ees

Notice that $0\leq y\leq \sigma_i$ is equivalent to $h(t)-\sigma_i\leq x\leq h(t)$, $i=5,6$. By means of $0<h'(t)\leq M_3$ and $v_x(t,x)=-\psi_y(t,y)$ ($u_x(t,x)=-\phi_y(t,y)$), It follows from \eqref{3.38a} and \eqref{3.39a} that
 \bes
 \|u\|_{C^{\frac{1+\alpha}2,1+\alpha}([i'+4,\,\yy]\times[h(t)-\sigma_5,\,h(t)])}\leq C,
 \label{3.40a}\\[1mm]
\|v\|_{C^{\frac{1+\alpha}2,1+\alpha}([i'+3,\,\yy]\times[h(t)-\sigma_6,\,h(t)])}\leq C.
 \nonumber\ees
Recalling that $\sigma_6=\sigma_4+2$, we have
\bes
\|v\|_{C^{\frac{1+\alpha}2,1+\alpha}([i'+3,\,\yy]\times[h(t)-\sigma_4,\,h(t)])}\leq C.\label{3.42a}
\ees

A combination of \eqref{3.18a} and \eqref{3.40a} implies
\bes
\|u\|_{C^{1+\frac\alpha2,2+\alpha}([i'+4,\infty]\times[0,h(t)])}\leq C, \label{3.43}
\ees
and a combination of \eqref{3.17a} and \eqref{3.42a} yields
\bes
\|v\|_{C^{1+\frac\alpha2,2+\alpha}([i'+3,\infty]\times[0,h(t)])}\leq C. \label{3.44}
\ees

Finally, using \eqref{3.12} with $i_1=i'+4$ and \eqref{3.43} we get the first estimate in \eqref{3.1}. Employing \eqref{3.13} with $i_2=i'+3$ and \eqref{3.44} we get the second estimate in \eqref{3.1}.
\end{proof}

\section{Long time behavior of $(u,v,h)$}
\subsection{Long time behavior of $(u,v)$ in the case of $h_\infty<\infty$}

\begin{theo}\label{t4.1}
Let $(u,v,h)$ be the unique global solution of the problem \eqref{1.1}. If $h_\infty<\infty$, then $\dd\lim_{t\to\infty}h'(t)=0$.
\end{theo}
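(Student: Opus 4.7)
The plan is to combine two ingredients: the monotonicity/boundedness of $h$, which gives integrability of $h'$, and the uniform Hölder estimate for $h'$ from Theorem~\ref{t3.1}, which upgrades ``integrable" to ``vanishing at infinity."

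First I would observe that Theorem~\ref{t2.1}(ii) yields $h'(t)>0$ for all $t>0$, so $h$ is strictly increasing. By assumption $h_\infty<\infty$, hence
\[
\int_0^\infty h'(t)\,\mathrm{d}t = h_\infty - h_0 < \infty.
\]
Next, Theorem~\ref{t3.1} gives $\|h'\|_{C^{(1+\alpha)/2}(\bar{\mathbb{R}}_+)}\le C$ for some $C>0$ independent of $t$, so $h'$ is uniformly Hölder continuous, and in particular uniformly continuous, on $[0,\infty)$.

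The conclusion then follows from the standard fact that a nonnegative, uniformly continuous function on $[0,\infty)$ whose integral is finite must tend to zero. Concretely, I would argue by contradiction: if $\limsup_{t\to\infty}h'(t)=2\delta>0$, pick a sequence $t_n\to\infty$ with $h'(t_n)\ge\delta$. Using the Hölder bound, for any $s$ with $|s-t_n|\le r$ we have
\[
|h'(s)-h'(t_n)|\le C\,|s-t_n|^{(1+\alpha)/2}\le C r^{(1+\alpha)/2},
\]
so choosing $r>0$ small enough (independent of $n$) gives $h'(s)\ge \delta/2$ on each interval $[t_n,t_n+r]$. Passing to a subsequence so that these intervals are disjoint yields
\[
\int_0^\infty h'(s)\,\mathrm{d}s \ge \sum_{n} \int_{t_n}^{t_n+r}\frac{\delta}{2}\,\mathrm{d}s = \infty,
\]
contradicting the finite integral above. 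Hence $\lim_{t\to\infty}h'(t)=0$.

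Since each step is short, I do not anticipate any substantive obstacle; the only point that requires care is citing the correct regularity statement: the Hölder exponent $(1+\alpha)/2>0$ from \eqref{3.2} is exactly what turns mere integrability of $h'$ into pointwise decay. If one only had $h'\in L^\infty$, the conclusion would fail in general, so invoking Theorem~\ref{t3.1} is essential.
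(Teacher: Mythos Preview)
Your proof is correct and follows essentially the same approach as the paper: the paper's proof is a one-line invocation of \eqref{2.7}, \eqref{3.2}, and $h_\infty<\infty$, and you have simply spelled out the standard argument that a nonnegative, uniformly (H\"older) continuous function with finite integral must vanish at infinity.
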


\begin{proof}
In view of \eqref{2.7}, \eqref{3.2} and assumption $h_\infty<\infty$, it is easy to derive that $\dd\lim_{t\to\infty}h'(t)=0$.
\end{proof}

Next, we give a lemma which enables us to obtain the vanishing phenomenon.

\begin{lem}{\rm(\cite[Lemma 4.1]{WZppd})}\label{lemma 4.1}\, Let $d$, $C$, $\mu$ and $s_0$ be positive constants, $w\in W^{1,2}_p((0,T)\times(0,s(t)))$ for some $p>1$ and any $T>0$, and $w_x\in C([0,\infty)\times[0,s(t)])$, $s\in C^1([0,\infty))$. If $(w,s)$ satisfies
  \vspace{-.5mm}\bess\left\{\begin{array}{lll}
 w_t-dv_{xx}\geq -C w, &t>0,\ \ 0<x<s(t),\\[1mm]
 w\ge 0,\ \ \ &t>0, \ \ x=0,\\[1mm]
 w=0,\ \ s'(t)\geq-\mu w_x, \ &t>0,\ \ x=s(t),\\[1mm]
 w(0,x)=w_0(x)\ge,\,\not\equiv 0,\ \ &x\in (0,m_0),\\[1mm]
 s(0)=s_0,
 \end{array}\right.\vspace{-.5mm}\eess
and
  \vspace{-.5mm}
  \bess
  &\dd\lim_{t\to\infty} s(t)=s_\infty<\infty, \ \ \ \lim_{t\to\infty} s'(t)=0,&\\[1mm]
& \|w(t,\cdot)\|_{C^1[0,\,s(t)]}\leq M, \ \ \forall \ t>1&
 \eess
for some constant $M>0$.  Then
  \vspace{-.5mm}\bess
 \lim_{t\to\infty}\,\max_{0\leq x\leq s(t)}w(t,x)=0.
 \lbl{zz7.25}\vspace{-1mm}\eess
\end{lem}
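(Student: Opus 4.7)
The plan is to argue by contradiction. Suppose the conclusion fails; then there exist $\delta>0$ and sequences $t_n\to\infty$, $x_n\in[0,s(t_n)]$ with $w(t_n,x_n)\ge 2\delta$. Since $w(t,s(t))=0$ and $\|w(t,\cdot)\|_{C^1}\le M$ for $t>1$, the mean value theorem gives $2\delta\le w(t_n,x_n)\le M(s(t_n)-x_n)$, so $s(t_n)-x_n\ge 2\delta/M$. Passing to a subsequence, $x_n\to x_0\in[0,s_\infty-2\delta/M]$, which lies strictly to the left of $s_\infty$. (That $w\ge 0$ throughout, not just at $x=0$, follows from the weak minimum principle applied to the supersolution inequality with nonnegative boundary and initial data.)

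Next, introduce the time-shifted functions $w_n(\tau,x):=w(t_n+\tau,x)$ for $\tau\in[-1,1]$ and $x\in[0,s_n(\tau)]$, where $s_n(\tau):=s(t_n+\tau)$. Since $s(t)\to s_\infty$ and $s'(t)\to 0$, we have $s_n\to s_\infty$ and $s_n'\to 0$ uniformly on $[-1,1]$. Straightening by $y=x/s_n(\tau)$, the rescaled functions $\tilde w_n(\tau,y):=w_n(\tau,s_n(\tau)y)$ satisfy a uniformly parabolic differential inequality on the fixed cylinder $[-1,1]\times[0,1]$ whose principal coefficient $d/s_n^2(\tau)$ tends to $d/s_\infty^2$ and whose drift $s_n'(\tau)y/s_n(\tau)$ tends to $0$. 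The uniform $C^1$ bound on $w$ in $x$ transfers to $\tilde w_n$, and the straightened Stefan inequality $\tilde w_{n,y}(\tau,1)\ge -s_n(\tau)s_n'(\tau)/\mu$ combined with $\tilde w_{n,y}(\tau,1)\le 0$ (which holds because $\tilde w_n\ge 0$ and $\tilde w_n(\tau,1)=0$) forces $\tilde w_{n,y}(\tau,1)\to 0$ uniformly in $\tau$. Applying interior and boundary parabolic $L^p$ estimates to the supersolution problem (rewritten as $\tilde w_{n,\tau}-(d/s_n^2)\tilde w_{n,yy}-(s_n'y/s_n)\tilde w_{n,y}+C\tilde w_n=f_n$ with $f_n\ge 0$ in $L^p$) yields uniform $W_p^{1,2}$ bounds; Sobolev embedding together with Arzel\`a--Ascoli then produces a subsequence converging to a limit $W(\tau,y)$ on $[-1,1]\times[0,1]$ satisfying $W_\tau-(d/s_\infty^2)W_{yy}+CW\ge 0$, $W\ge 0$, $W(\tau,1)=W_y(\tau,1)=0$, and $W(0,y_0)\ge 2\delta$ with $y_0:=x_0/s_\infty<1$.

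To finish, set $V:=e^{C\tau}W$; then $V_\tau-(d/s_\infty^2)V_{yy}\ge 0$, so $V\ge 0$ is a supersolution of the pure heat equation on $[-1,1]\times[0,1]$ with $V(0,y_0)>0$ and $V(\tau,1)=0$. By the strong minimum principle, $V>0$ in the interior of the parabolic subcylinder $(0,1]\times[0,1)$. The Hopf boundary lemma applied at a point $(\tau_*,1)$ with $\tau_*\in(0,1]$ then gives $V_y(\tau_*,1)<0$, hence $W_y(\tau_*,1)<0$, contradicting $W_y(\tau_*,1)=0$.

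The principal technical obstacle is the limit extraction in the middle step: the hypothesis only supplies $W_p^{1,2}$ regularity and a parabolic differential inequality, so classical Schauder theory is unavailable, and one must work with the linear parabolic $L^p$ theory for supersolutions (viewing the inequality as an equation with a nonnegative measurable source term). The structural content that the proof exploits is that $s_\infty<\infty$ together with $s'(t)\to 0$ forces the limiting problem on the fixed straightened domain to inherit \emph{both} zero Dirichlet and zero Neumann data at $y=1$, an overdetermination that is incompatible with $W\not\equiv 0$ by the Hopf lemma.
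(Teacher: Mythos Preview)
The paper does not supply a proof of this lemma; it is quoted verbatim from \cite[Lemma~4.1]{WZppd}. Your overall strategy---contradiction, time-shift and straightening, the observation that $s'(t)\to 0$ forces the overdetermined boundary data $W(\cdot,1)=W_y(\cdot,1)=0$ in the limit, and a Hopf contradiction---is exactly the argument in the cited reference.

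There is, however, a real gap in your compactness step. You rewrite the straightened inequality as $\tilde w_{n,\tau}-(d/s_n^2)\tilde w_{n,yy}-(s_n'y/s_n)\tilde w_{n,y}+C\tilde w_n=f_n$ with $f_n\ge 0$ and then assert that parabolic $L^p$ estimates give uniform $W_p^{1,2}$ bounds. But those estimates bound $\|\tilde w_n\|_{W_p^{1,2}}$ in terms of $\|f_n\|_{L^p}$, and nothing in the hypotheses controls $\|f_n\|_{L^p}$ uniformly in $n$; the assumption $w\in W_p^{1,2}$ only places each individual $f_n$ in $L^p$, with a bound that may blow up along the sequence. One-sided theory for nonnegative supersolutions (weak Harnack, Krylov--Safonov) yields at best $C^\alpha$ control, not $W_p^{1,2}$. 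Without uniform second-order bounds you cannot pass the differential inequality to a limit $W$ with enough regularity for Hopf.

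The standard repair is to slip a genuine solution underneath the supersolution. Let $z_n$ solve the linear \emph{equation} $z_{n,\tau}-(d/s_n^2)z_{n,yy}-(s_n'y/s_n)z_{n,y}+Cz_n=0$ on $(0,1]\times(0,1)$ with $z_n(\tau,0)=z_n(\tau,1)=0$ and $z_n(0,\cdot)=\tilde w_n(0,\cdot)$. Comparison gives $\tilde w_n\ge z_n\ge 0$, and since $z_n$ has zero source the interior Schauder estimates furnish uniform $C^{1+\alpha/2,2+\alpha}$ bounds; a subsequence converges to a classical solution $Z$ with $Z\not\equiv 0$ (the initial data are uniformly $C^1$ with $\tilde w_n(0,y_n)\ge 2\delta$). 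Hopf now legitimately gives $Z_y(\tau_*,1)<0$ for some $\tau_*\in(0,1]$. On the other hand $\tilde w_n\ge z_n$ with equality at $y=1$ forces $z_{n,y}(\tau_*,1)\ge \tilde w_{n,y}(\tau_*,1)\ge -s_n(\tau_*) s_n'(\tau_*)/\mu\to 0$, while $z_{n,y}(\tau_*,1)\to Z_y(\tau_*,1)<0$: the desired contradiction. This auxiliary-solution device is the piece your proposal is missing.
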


By using of Lemma \ref{lemma 4.1} we have the following theorem.

\begin{theo}\label{t4.2}
Let $(u,v,h)$ be the unique global solution of \eqref{1.1}. If $h_\infty<\infty$, then,
\bes
\lim_{t\to\infty}\|v(t,\cdot)\|_{C^2([0,h(t)])}=0. \label{4.4}
\ees
\end{theo}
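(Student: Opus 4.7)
The plan is to apply Lemma \ref{lemma 4.1} to the prey equation to first show $\|v(t,\cdot)\|_{C([0,h(t)])} \to 0$, and then upgrade this uniform convergence to $C^2$ convergence by means of interpolation against the uniform bound furnished by Theorem \ref{t3.1}.

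First I would verify that $v$ and $h$ satisfy the hypotheses of Lemma \ref{lemma 4.1} with $s(t) = h(t)$, $s_0 = h_0$, $w = v$, $w_0 = v_0$. The reaction term $g(u,v) = v(q-v) - ruv/(c + u + mv)$ can be estimated from below as $g(u,v) \geq -Cv$ for some $C > 0$, using the uniform bounds $u \leq M_1$, $v \leq M_2$ from Theorem \ref{t2.1}(ii) together with $ru/(c + u + mv) \leq r$; the boundary conditions $v_x(t,0)=0$, $v(t,h(t))=0$ and the Stefan condition $h'(t) = -\mu v_x(t,h(t))$ match exactly (with equality) the structural hypotheses of the lemma, and $v_0$ is nonnegative and not identically zero by \eqref{1.3}. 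The tail hypotheses are supplied by the present section: Theorem \ref{t4.1} gives $\lim_{t\to\infty} h'(t) = 0$, while Theorem \ref{t3.1} provides $\|v(t,\cdot)\|_{C^{2+\alpha}([0,h(t)])} \leq C$, in particular a uniform $C^1$ bound on $v(t,\cdot)$ for $t > 1$. Applying Lemma \ref{lemma 4.1} therefore yields
\[
\lim_{t\to\infty}\,\max_{0\leq x\leq h(t)} v(t,x) = 0.
\]

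Next I would upgrade this $C^0$ decay to $C^2$ decay via the standard Hölder-norm interpolation
\[
\|\varphi\|_{C^2([0,\ell])} \leq K\, \|\varphi\|_{C^{2+\alpha}([0,\ell])}^{2/(2+\alpha)} \|\varphi\|_{C([0,\ell])}^{\alpha/(2+\alpha)},
\]
valid on any bounded interval, with a constant $K$ that depends only on an upper bound for $\ell$ (and on $\alpha$). Since $h_0 \leq h(t) \leq h_\infty < \infty$, this constant can be chosen uniformly in $t$. Combining the uniform bound $\|v(t,\cdot)\|_{C^{2+\alpha}([0,h(t)])} \leq C$ from Theorem \ref{t3.1} with the just-established $\|v(t,\cdot)\|_{C([0,h(t)])} \to 0$ delivers $\|v(t,\cdot)\|_{C^2([0,h(t)])} \to 0$, which is \eqref{4.4}.

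There is no real obstacle in this argument, since the infrastructure—uniform parabolic regularity (Theorem \ref{t3.1}), the decay $h'(t)\to 0$ (Theorem \ref{t4.1}), and the auxiliary dissipation lemma (Lemma \ref{lemma 4.1})—has already been put in place. The only point requiring a line of care is that the interpolation constant must be chosen independently of $t$; this is immediate because $h_\infty < \infty$ keeps the underlying intervals $[0,h(t)]$ confined to a compact range of lengths bounded away from zero.
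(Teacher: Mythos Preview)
Your proposal is correct and follows essentially the same two-step strategy as the paper: first apply Lemma~\ref{lemma 4.1} (using the lower bound $g(u,v)\ge -Cv$, Theorem~\ref{t4.1}, and the uniform $C^1$ bound from Theorem~\ref{t3.1}) to obtain $\|v(t,\cdot)\|_{C([0,h(t)])}\to 0$, and then upgrade to $C^2$ convergence using the uniform $C^{2+\alpha}$ bound. The only cosmetic difference is in the upgrade step: the paper straightens to the fixed interval $[0,1]$ via $z(t,y)=v(t,h(t)y)$ and invokes the compact embedding $C^{2+\alpha}([0,1])\hookrightarrow\hookrightarrow C^2([0,1])$ together with a subsequence/uniqueness-of-limit argument, whereas you interpolate directly on $[0,h(t)]$ with a constant uniform in $t$ thanks to $h_0\le h(t)\le h_\infty<\infty$---both devices encode the same compactness.
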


\begin{proof}
According to Theorem \ref{t2.1}, we find that
\[g(u,v)\ge -v\kk(v+\dd\frac{ru}{c+u+mv}\rr)\ge -v(M_2+rM_1/c).\]
Recall \eqref{2.1}, the second estimate in \eqref{3.1} and Theorem \ref{t4.1}. An application of Lemma \ref{lemma 4.1} yields
 \bess
\lim_{t\to\infty}\|v(t,\cdot)\|_{C([0,h(t)])}=0.
\eess
The second estimate in \eqref{3.1} implies
 \bess
\max_{t\geq 1}\|v(t,\cdot)\|_{C^{2+\alpha}([0,h(t)])}\leq C.
 \eess
And so $\dd\lim_{t\to\infty}\|z(t,\cdot)\|_{C([0,1])}=0$ and
 \bess
\max_{t\geq 1}\|z(t,\cdot)\|_{C^{2+\alpha}([0,1])}\leq C,
 \eess
where $z(t,y)=v(t,h(t)y)$. Since $C^{2+\alpha}([0,1])\hookrightarrow\hookrightarrow C^2([0,1])$, there exists $t_k\to\infty$ such that $z(t_k,y)\to z^*(y)$ in $C^2([0,1])$. The uniqueness of the limit shows that $z^*(y)\equiv 0$. And then $z(t,y)\to 0$ in $C^2([0,1])$ as $t\to\infty$. This combined with \eqref{3.11} allows us to deduce \eqref{4.4}.
\end{proof}

\begin{remark}
Theorem \ref{t4.2} asserts that if the prey can not spread into the whole space, then it will die out.
\end{remark}

With the help of Theorem \ref{t4.2}, we show that the predator will die out in the case of $h_\infty<\infty$.

\begin{theo}\label{theorem 4.4}
Let $(u,v,h)$ be the unique global solution of \eqref{1.1}. If $h_\infty<\infty$, then
\bess
 \lim_{t\to\infty}\|u(t,\cdot)\|_{C([0,h(t)])}=0.
\eess
\end{theo}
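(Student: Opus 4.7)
My plan is to first establish an $L^1$-decay estimate for $u$ by an energy/mass identity, and then upgrade it to $C([0,h(t)])$-decay using the uniform higher regularity of $u$ from Theorem \ref{t3.1}.

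\textbf{Step 1: Mass identity.} Set $E(t)=\int_0^{h(t)}u(t,x)\,{\rm d}x$. Using $u(t,h(t))=0$ together with the equation \eqref{212a} and integration by parts, I get
\[
E'(t)=u(t,h(t))h'(t)+\int_0^{h(t)}u_t\,{\rm d}x
=u_x(t,h(t))-u_x(t,0)-\bbb[u\chi(u)v_x\bbb]_0^{h(t)}+\int_0^{h(t)}f(u,v)\,{\rm d}x.
\]
The term $u_x(t,0)$ vanishes by the boundary condition, while $u\chi(u)v_x$ vanishes at $x=0$ (because $v_x(t,0)=0$) and at $x=h(t)$ (because $u(t,h(t))=0$). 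Since $u>0$ in $(0,t]\times[0,h(t))$ and $u(t,h(t))=0$, the Hopf boundary lemma gives $u_x(t,h(t))\le 0$. Therefore
\[
E'(t)\le\int_0^{h(t)}\kk[\frac{bv}{c+u+mv}-a\rr]u\,{\rm d}x.
\]

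\textbf{Step 2: Exponential $L^1$-decay.} By Theorem \ref{t4.2}, $v(t,\cdot)\to 0$ uniformly on $[0,h(t)]$, so $\tfrac{bv}{c+u+mv}\le\tfrac{b}{c}\|v(t,\cdot)\|_\infty\to 0$. Hence there exists $t_0$ with $\tfrac{bv}{c+u+mv}-a\le -a/2$ for all $t\ge t_0$, which gives $E'(t)\le -\tfrac{a}{2}E(t)$ and therefore
\[
E(t)=\int_0^{h(t)}u(t,x)\,{\rm d}x\longrightarrow 0\ \ {\rm as}\ t\to\infty.
\]

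\textbf{Step 3: Upgrade to uniform decay.} Suppose, for contradiction, that $\|u(t,\cdot)\|_{C([0,h(t)])}\not\to 0$. Then there exist $\ep_0>0$ and $t_n\to\infty$ together with $x_n\in[0,h(t_n)]$ such that $u(t_n,x_n)\ge\ep_0$. By Theorem \ref{t3.1}, $u$ is uniformly Hölder continuous in $x$ on $D_\infty$ with some exponent $\alpha$ and constant $K$, independent of $t$. Hence for $|x-x_n|\le\delta_0:=(\ep_0/(2K))^{1/\alpha}$ we have $u(t_n,x)\ge\ep_0/2$. Since $h_\infty<\infty$, for $n$ large the interval $[x_n-\delta_0,x_n+\delta_0]\cap[0,h(t_n)]$ has length at least $\delta_0/2$, so
\[
E(t_n)\ge\frac{\ep_0}{2}\cdot\frac{\delta_0}{2}>0,
\]
contradicting Step 2. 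This proves $\dd\lim_{t\to\infty}\|u(t,\cdot)\|_{C([0,h(t)])}=0$.

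The only subtlety I expect is verifying that none of the boundary contributions in Step 1 are harmful; the cross-diffusion flux $u\chi(u)v_x$ vanishes at both endpoints precisely because the homogeneous Neumann condition on $v$ at $x=0$ and the homogeneous Dirichlet condition on $u$ at $x=h(t)$ cooperate, and the $u_x(t,h(t))$ term has the right sign. Once this identity is in place, the rest reduces to the standard passage from integral decay to uniform decay via the uniform Hölder bound already supplied by Theorem \ref{t3.1}.
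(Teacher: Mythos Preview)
Your proof is correct and takes a genuinely different route from the paper's.

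The paper argues pointwise: it expands the equation for $u$ as in \eqref{212a}, uses the full $C^2$-convergence $\|v(t,\cdot)\|_{C^2([0,h(t)])}\to 0$ from Theorem~\ref{t4.2} to make the term $-\chi(u)v_{xx}u$ small, and then compares $u$ with the spatially constant supersolution $\bar u(t)=M_1e^{-(a/2)(t-T)}$ via the comparison principle (Lemma~A.1). As the paper itself remarks, that comparison step requires the additional hypothesis $\chi\in C^2$, which is stronger than the standing assumption \eqref{1.2}.

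Your approach sidesteps both the $v_{xx}$-control and the extra regularity of $\chi$: because the cross-diffusion term is in divergence form, it integrates to a boundary flux $[u\chi(u)v_x]_0^{h(t)}$ that vanishes, and the remaining boundary contribution $u_x(t,h(t))$ has the favorable sign. Thus only the $C^0$-decay of $v$ (a weaker consequence of Theorem~\ref{t4.2}) is needed to obtain exponential $L^1$-decay of $u$, and the uniform H\"older bound from Theorem~\ref{t3.1} upgrades this to uniform decay. The trade-off is that the paper's route is self-contained once Theorem~\ref{t4.2} is in hand, whereas yours invokes Theorem~\ref{t3.1} a second time; but your argument is cleaner and, notably, does not need the extra $\chi\in C^2$ assumption. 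One minor cosmetic point: in Step~3 the lower bound on the length of $[x_n-\delta_0,x_n+\delta_0]\cap[0,h(t_n)]$ should be stated as $\min\{\delta_0,h_0\}$ rather than $\delta_0/2$, but this does not affect the argument.
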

\begin{proof}
In the system \eqref{1.1} we have
 \bes
 \left\{\begin{array}{lll}
 u_t-u_{xx}+\eta'(u)v_x u_x=-v_{xx}\chi(u)u+\dd\frac{buv}{c+u+mv}-au, \ \ &t>0,\ \ 0<x<h(t),\\[1mm]
 u_x(t,0)=u(t,h(t))=0,\ \ \ &t\ge0,\\[1mm]
  u(0,x)=u_0(x),\ \ &0\leq x\leq h_0.
 \end{array}\right.\label{th1}
 \ees
In view of \eqref{1.2}, there exists $E>0$ such that
 \[ |\chi(u)|<E\ \ \ {\rm for\ all}\ u\in[0,\infty).  \]
In view of \eqref{4.4}, for any fixed $\ep>0$ fulfilling $(E+1)\ep=a/2$, one can find $T_1>0$ such that
\[ |v_{xx}(t,x)|\leq \varepsilon\ \ \ \forall\ t\geq T_1,\ x\in[0,h(t)]. \]
Moreover, again by \eqref{4.4}, one can find $T_2>0$ such that
\[ 0<\dd\frac{bv}{c+u+mv}\leq \ep\ \ \ \forall\ t\geq T_2. \]
Set $T:=\max\{T_1,T_2\}$. It is well known that
$$\bu(t)=M_1e^{(E\varepsilon+\varepsilon-a)(t-T)}\ {\rm for}\ t\geq T$$
is a solution of the following equation
\bess
 \left\{\begin{array}{lll}
 \bu_t=(\varepsilon E-a+\varepsilon)\bu, &t>T,\\[1mm]
 \bu(T)=M_1.
 \end{array}\right.
 \eess
Then, $\bu(t)$ is an upper solution to the problem \eqref{th1}. Hence by the comparson principle (Lemma A.1) we have
\[ 0<u(t,x)\leq \bu(t)\ \ \forall\ t\geq T,\ x\in[0,h(t)]. \]
Due to $E\varepsilon+\varepsilon-a=-a/2<0$, we get
$\dd\lim_{t\to\infty}\bu(t)=0$,
which yields
\[ 0\leq\dd\limsup_{t\to\infty} u(t,x)\leq \dd\lim_{t\to\infty}\bu(t)=0 \]
uniformly for $x$ in $[0,h(t)]$. This completes the proof.
\end{proof}
\begin{remark}
In the application of the comparison principle (Lemma A.1), we used the assumption that $\chi(u)\in C^2([0,\infty))$, which is a necessary assumption for our result.
\end{remark}

\subsection{Long time behavior of $(v,h)$ in the case of $h_\infty=\infty$}

\begin{theo}\label{t4.5}
If $h_\infty=\infty$, then $\dd\limsup_{t\to\infty}v(t,x)\leq q$ uniformly for $x$ in any bounded set of $[0,\infty)$, and there exists $k_1>0$ such that
$\limsup\limits_{t\to\infty}\frac{h(t)}{t}\leq k_1$.
\end{theo}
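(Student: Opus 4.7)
The plan is to prove the two assertions in order. For the first, the strategy is to dominate $v$ pointwise by a spatially-constant supersolution that solves the logistic ODE. Let $\bar V(t)$ solve
$$\bar V'(t) = \bar V(q - \bar V), \qquad \bar V(0) = M_2 := \max\{q,\,\|v_0\|_{C([0,h_0])}\}.$$
Since $M_2 \geq q$, the solution $\bar V$ is monotone decreasing and $\bar V(t) \to q$ as $t \to \yy$. Because $g(u,v) = v(q-v) - \dd\frac{ruv}{c+u+mv} \leq v(q-v)$ for all $u,v \geq 0$, the function $\bar V$ is a supersolution of the $v$-equation on the moving domain: it matches the Neumann condition at $x = 0$, dominates $v$ at the free boundary ($\bar V(t) > 0 = v(t,h(t))$), and $\bar V(0) = M_2 \geq v_0(x)$.

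Writing $w = \bar V - v$, a direct computation shows
$$w_t - d w_{xx} + (\bar V + v - q)\,w \;=\; \dd\frac{ruv}{c+u+mv} \;\geq\; 0 \quad \text{on } \{0 < x < h(t),\, t > 0\},$$
with $w \geq 0$ on the parabolic boundary and bounded zero-order coefficient. The maximum principle for free-boundary problems then gives $w \geq 0$, i.e., $v(t,x) \leq \bar V(t)$; sending $t \to \yy$ yields $\limsup_{t\to\yy} v(t,x) \leq q$ uniformly for $x \in [0,h(t))$, and in particular on any bounded subset of $[0,\yy)$.

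For the second assertion, Theorem \ref{t2.1}(ii) already provides $0 < h'(t) \leq M_3$ for every $t > 0$, so $h(t) \leq h_0 + M_3 t$ and $\limsup_{t\to\yy} h(t)/t \leq M_3$; choosing $k_1 = M_3$ finishes the proof. One may obtain a $k_1$ depending only on the system parameters (and not on initial data) by rerunning the quadratic-barrier argument of Step 3 in the proof of Theorem \ref{t2.1} on $[h(t)-1/K,\,h(t)]$ past the time when $v \leq q + \ep$ (granted by the first assertion), with $M_2$ replaced by $q + \ep$ and with $\|v_0'\|_{C([0,h_0])}$ replaced by the uniform-in-time $C^1$ bound on $v$ supplied by Theorem \ref{t3.1}.

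The only real obstacle is the application of the comparison principle on the moving domain in the first part; this is routine once the boundary data are checked as above, and both steps rely only on bounds and regularity already established earlier in the paper.
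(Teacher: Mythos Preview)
Your proof is correct but takes a genuinely different route from the paper's. The paper compares $(v,h)$ with the solution $(\bar v,\bar h)$ of the full \emph{free boundary} logistic problem
\[
\bar v_t-d\bar v_{xx}=\bar v(q-\bar v),\quad \bar h'(t)=-\mu\bar v_x(t,\bar h(t)),
\]
via Lemma~A.2, and then invokes the Du--Lin results \cite{DLin} to conclude both that $\bar v\to q$ locally uniformly and that $\bar h(t)/t\to k_1$ for a specific $k_1>0$ (the logistic spreading speed). Your argument is more elementary on both counts: for the first assertion you dominate $v$ by the spatially constant ODE solution $\bar V(t)$, reducing the comparison to a direct maximum-principle computation on the given moving domain without needing the free-boundary comparison Lemma~A.2 or any Du--Lin machinery; for the second you simply read off $h'(t)\le M_3$ from Theorem~\ref{t2.1}(ii). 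The cost is that your $k_1=M_3$ depends on the initial data (through $\|v_0'\|_\infty$), whereas the paper's $k_1$ is the intrinsic spreading speed of the logistic free boundary problem; this sharper identification is what allows the paper later (Lemma~\ref{lemma a5.2}) to compare $k_1$ and $k_2$ as spreading speeds via \cite{DLin}. Your remark about refining the barrier argument past a large time recovers a $k_1$ independent of initial data, though still not the sharp Du--Lin constant. For the statement as written, your proof suffices and is self-contained.
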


\begin{proof}
We observe that $(v,h)$ is an lower solution of the following problem
\bess
 \left\{\begin{array}{lll}
 \bv_t-d\bv_{xx}= \bv(q-\bv),\ \ &t>0, \ \ 0<x<\bh(t),\\[1mm]
 \bv_x(t,0)=\bv(t,\bh(t))=0,\ \ \ &t\geq0,\\[1mm]
 \bh'(t)=-\mu \bv_x(t,\bh(t)), \ \ \ &t\geq0,\\[1mm]
 \bv(0,x)=v_0(x),\ \ \ &0\leq x\leq \bh(0),\\[1mm]
 \bh(0)=h_0.
 \end{array}\right.
 \eess
By the comparison principle, $h(t)\le\bh(t)$, $v(t,x)\leq\bv(t,x)$, and so $\bh_\infty\geq h_\infty=\infty$. Using \cite[Theorems 3.4 and 4.2]{DLin},  $\dd\lim_{t\to\infty}\bv(t,x)=q$ uniformly for $x$ in any bounded set of $[0,\infty)$ and $\lim\limits_{t\to\infty}\frac{\bh(t)}{t}=k_1$ for $k_1>0$. Hence, our conclusion follows from the comparison principle (Lemma A.2).
\end{proof}

\section{Conditions for spreading and vanishing}

\begin{theo}\label{theorem a5.1}
Suppose that $h_\infty<\infty$. Let $(u,v,h)$ be the unique global solution of \eqref{1.1}. Then,
\bess
h_\infty\leq \frac{\pi}2\sqrt{{d}/{q}}.
\eess
\end{theo}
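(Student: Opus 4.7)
The plan is a proof by contradiction based on the principal eigenvalue of $-d\,d^2/dx^2$ on an interval of length near $h_\infty$. Suppose, to the contrary, that $h_\infty > \frac{\pi}{2}\sqrt{d/q}$. Since $h(t)\nearrow h_\infty$, I can fix $T>0$ with $h(T) > \frac{\pi}{2}\sqrt{d/q}$, so that
\[
\lambda_1 \;:=\; \frac{d\pi^2}{4\,h(T)^2} \;<\; q
\]
is the principal eigenvalue of $-d\phi''=\lambda\phi$ on $(0,h(T))$ with $\phi'(0)=0=\phi(h(T))$, and the corresponding principal eigenfunction $\phi(x)=\cos\!\bigl(\pi x/(2h(T))\bigr)$ is positive on $[0,h(T))$ with $\phi'(h(T))<0$. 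By Theorems \ref{t4.2} and \ref{theorem 4.4}, both $u(t,\cdot)$ and $v(t,\cdot)$ decay uniformly to zero on $[0,h(t)]$ as $t\to\infty$. Hence for any prescribed $\epsilon\in(0,q-\lambda_1)$ there is $T_*>T$ with
\[
v(t,x) + \frac{r\,u(t,x)}{c+u(t,x)+m\,v(t,x)} \;\le\; \epsilon \qquad \text{for all } t\ge T_*,\ x\in[0,h(t)].
\]
On the fixed subinterval $[0,h(T)]\subset[0,h(t)]$ the prey equation from \eqref{1.1} then gives $v_t - d\,v_{xx}\ge (q-\epsilon)\,v$, and by the strong maximum principle $v(t,h(T))>0$ for every $t>T$ since $h(T)$ is an interior point of $[0,h(t)]$.

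Next I would introduce the weighted integral $E(t):=\int_0^{h(T)} v(t,x)\phi(x)\,dx$, differentiate in $t$, plug in the above differential inequality, and integrate by parts twice to transfer the diffusion onto $\phi$. The boundary data $v_x(t,0)=0$, $\phi'(0)=0$, $\phi(h(T))=0$ eliminate three of the four surface terms; the surviving one, $-d\,v(t,h(T))\,\phi'(h(T))$, is strictly positive since $v(t,h(T))>0$ and $\phi'(h(T))<0$. Together with $-d\phi''=\lambda_1\phi$, this gives
\[
E'(t) \;\ge\; (q-\epsilon-\lambda_1)\,E(t),\qquad t\ge T_*,
\]
so $E(t)$ grows at least exponentially in $t$. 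On the other hand, the uniform decay $v(t,\cdot)\to 0$ from Theorem \ref{t4.2} forces $E(t)\to 0$, a contradiction. Hence $h_\infty\le \frac{\pi}{2}\sqrt{d/q}$.

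The argument is fairly mechanical; the only points that require care are that the extra boundary contribution produced by integration by parts has the favorable sign (it does, and in fact only strengthens the inequality), and that the full nonlinear reaction $v(q-v)-ruv/(c+u+mv)$ can be absorbed into the convenient lower bound $(q-\epsilon)v$, which follows directly from the long-time decay established in Section~4. No estimates beyond Theorems \ref{t4.2} and \ref{theorem 4.4} are required, and the bound is essentially a classical Du-Lin type eigenvalue comparison adapted to the present reaction structure.
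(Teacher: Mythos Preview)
Your argument is correct, but it follows a genuinely different route from the paper's own proof. The paper also starts by contradiction and uses Theorems~\ref{t4.2} and~\ref{theorem 4.4} to make the predation term small, but then constructs an explicit subsolution $\underline v$ to the logistic problem $\underline v_t-d\underline v_{xx}\le \underline v(q-\varepsilon-\underline v)$ on the moving domain $[0,h(t)]$ (citing the construction of Du--Lin, Lemma~3.1), obtains $v\ge\underline v$ by comparison, and reaches a contradiction at the free boundary: the subsolution forces $\limsup_{t\to\infty}v_x(t,h(t))<0$, whereas Theorem~\ref{t4.1} gives $h'(t)=-\mu v_x(t,h(t))\to 0$. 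Your approach instead freezes a fixed spatial interval $[0,h(T)]$, tests against the principal eigenfunction there, and extracts exponential growth of the weighted mass $E(t)$ from the differential inequality $v_t-dv_{xx}\ge(q-\varepsilon)v$; the contradiction then comes directly from $\|v(t,\cdot)\|_\infty\to 0$ rather than from the free boundary condition. Your route is more self-contained (no external subsolution construction is needed, only integration by parts and the explicit eigenpair), while the paper's route ties the contradiction more directly to the Stefan condition and reuses machinery already available from~\cite{DLin}.
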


\begin{proof} As $h_\infty<\infty$, it follows from Theorems \ref{t4.2} and \ref{theorem 4.4} that
$\dd\lim_{t\to\infty}\|u,\,v\|_{C([0,h(t)])}=0$.
For any small $\ep\in(0,q)$, there exists $\tau_1\gg 1$ such that
\[ \frac{ru}{c+u}\leq \ep,\ \ \forall\ t\geq\tau_1,\ x\in[0,h_\infty], \]
We assume that $h_\infty> \frac{\pi}2\sqrt{d/(q-\ep)}$. Then, there exists $\tau_2\gg 1$ such that
\[ h(t)>\frac{\pi}2\sqrt{d/(q-\ep)},\ \ \forall\ t\geq\tau_2. \]
Set $\tau=\max\{\tau_1,\tau_2\}$. Similar to the proof of \cite[Lemma 3.1]{DLin}, we can construct a function $\ud v$ which satisfies
\bes
 \left\{\begin{array}{lll}
 \ud v_t-d\ud v_{xx}\leq \ud v(q-\ep- \ud v),\ \ &t\geq\tau, \ \ 0<x<h(t),\\[1mm]
 \ud v_x(t,0)=\ud v(t,h(t))=0,\ \ \ &t\geq\tau,\\[1mm]
 \ud v(\tau,x)\leq v(\tau,x),\ \ \ &0\leq x\leq h_0.
 \end{array}\right.\nonumber
\ees
On the one hand, we observe that $-\ep\leq -\frac{r u}{c+u+mv}$ for $t\geq\tau$. According to the comparison principle, we obtain
\[ \ud v(t,x)\leq v(t,x) \ \ \ {\rm for}\ t\geq\tau,\ 0\leq x\leq h(t), \]
which yields that
\[ v_x(t,h(t))\leq \ud v_x(t,h(t)). \]
On the other hand, from the proof of \cite[Lemma 3.1]{DLin}, we have
$\dd\limsup_{t\to\infty}\ud v_x(t,h(t))<0$. Hence, we have
\[\limsup_{t\to\infty}v_x(t,h(t))<0. \]
Recall that $v_x(t,h(t))=-\frac{1}{\mu}h'(t)\to 0$ as $t\to\infty$, we get a
contradiction and then, for any small $\ep\in(0,q)$,
\bess
 h_\infty\leq \frac{\pi}2\sqrt{d/(q-\ep)}.
\eess
The desired result then follows by letting $\ep\to0$.
\end{proof}

\begin{remark}
Due to $h'(t)>0$ for $t>0$, it is straightforward to show that $h_0\geq \frac{\pi}2\sqrt{d/q}$ implies $h_\infty=\infty$.
\end{remark}

The following lemma provides a sufficient condition for vanishing.
\begin{lem}\label{lemma a5.1}
Suppose that $h_0<\frac{\pi}2\sqrt{d/q}$. Then there exists $\mu_0>0$ such that $h_\infty<\infty$ provided that $\mu\leq \mu_0$.
\end{lem}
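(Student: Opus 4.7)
\textbf{Proof plan for Lemma \ref{lemma a5.1}.}

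The plan is to bound $h(t)$ from above by constructing an explicit supersolution for the prey component, and then invoke the free-boundary comparison principle (Lemma A.2). Since $v$ satisfies
\[
v_t - d v_{xx} = v(q-v) - \dd\frac{ruv}{c+u+mv} \le v(q-v),
\]
together with $v_x(t,0)=0$, $v(t,h(t))=0$ and $h'(t)=-\mu v_x(t,h(t))$, it suffices to produce a pair $(\bar v,s)$ on $\{0\le x\le s(t)\}$ satisfying the reversed inequalities, with $s(0)\ge h_0$, $\bar v(0,x)\ge v_0(x)$, and $s'(t)\ge -\mu\bar v_x(t,s(t))$.

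Because $h_0<\tfrac{\pi}{2}\sqrt{d/q}$, I would pick $H\in(h_0,\tfrac{\pi}{2}\sqrt{d/q})$ and then a small $\gamma>0$ such that $H\le\tfrac{\pi}{2}\sqrt{d/(q+\gamma)}$, i.e.\ $\tfrac{d\pi^2}{4H^2}\ge q+\gamma$. Set
\[
s(t)^2=h_0^2+(H^2-h_0^2)(1-e^{-\gamma t}),\qquad
\bar v(t,x)=M e^{-\gamma t}\cos\!\left(\dd\frac{\pi x}{2s(t)}\right),
\]
where $M\ge 1$ will be fixed momentarily. Then $h_0\le s(t)<H$, $s'(t)>0$, $\bar v_x(t,0)=0$ and $\bar v(t,s(t))=0$. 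A direct computation gives
\[
\bar v_t-d\bar v_{xx}=\bar v\!\left(\dd\frac{d\pi^2}{4s(t)^2}-\gamma\right)
+\dd\frac{\mu \pi x s'(t)}{2 s(t)^2}\,M e^{-\gamma t}\sin\!\left(\dd\frac{\pi x}{2s(t)}\right)
\ge \bar v(q+\gamma-\gamma)\ge \bar v(q-\bar v),
\]
so $\bar v$ is a supersolution of the PDE for $v$ on $[0,s(t)]$.

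For the free-boundary condition, $\bar v_x(t,s(t))=-\tfrac{M\pi}{2s(t)}e^{-\gamma t}$, while differentiating $s(t)^2$ yields $s(t)s'(t)=\tfrac{\gamma(H^2-h_0^2)}{2}e^{-\gamma t}$. Hence $s'(t)\ge -\mu\bar v_x(t,s(t))$ reduces to
\[
\dd\frac{\gamma(H^2-h_0^2)}{2}\,e^{-\gamma t}\ \ge\ \dd\frac{\mu M\pi}{2}\,e^{-\gamma t},
\qquad\text{i.e.}\qquad \mu\le\dd\frac{\gamma(H^2-h_0^2)}{M\pi}=:\mu_0.
\]
Finally, choose $M$ large enough that $Me^{0}\cos\!\big(\tfrac{\pi x}{2h_0}\big)\ge v_0(x)$ on $[0,h_0]$; this is possible because $v_0(h_0)=0$ with $v_0\in C^2$, so Hopf's lemma gives the right linear behaviour near $x=h_0$, and on any compact subset of $[0,h_0)$ the cosine is bounded below by a positive constant so we can absorb $\|v_0\|_\infty$.

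With these choices, the free-boundary comparison principle (Lemma A.2, used as in Theorem \ref{t4.5}) yields $h(t)\le s(t)\le H$ for all $t\ge 0$, so $h_\infty\le H<\infty$ whenever $\mu\le\mu_0$. The only delicate step is arranging $\bar v(0,\cdot)\ge v_0$ and keeping the resulting $M$ finite, which is why the Hopf-type matching at $x=h_0$ is needed; the remaining inequalities are essentially algebraic once $H$ and $\gamma$ are chosen.
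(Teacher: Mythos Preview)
Your proof is correct and follows essentially the same supersolution--plus--comparison approach as the paper: the paper takes $\beta(t)=h_0\big(1+\delta-\tfrac{\delta}{2}e^{-\alpha t}\big)$ in place of your $s(t)$, but the ansatz $\bar v = Me^{-\gamma t}\cos\big(\pi x/(2s(t))\big)$ and the appeal to Lemma~A.2 are identical. Two harmless slips: the stray $\mu$ in the $\sin$ term of your $\bar v_t-d\bar v_{xx}$ computation should not be there (the term is still nonnegative and discarded, so the inequality survives), and the initial-data comparison $M\cos(\pi x/(2h_0))\ge v_0(x)$ near $x=h_0$ needs only $v_0\in C^1$ together with $\cos(\pi x/(2h_0))\ge (h_0-x)/h_0$, not Hopf's lemma.
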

\begin{proof}
From \eqref{1.1}, $v$ satisfies
\bes
 \left\{\begin{array}{lll}
 v_t-dv_{xx}=v\dd\left(q-v-\frac{r u}{c+u+mv}\right),\ \ &t>0, \ \ 0<x<h(t),\\[3mm]
 v_x(t,0)=v(t,h(t))=0,\ \ \ &t\ge0,\\[1mm]
 v(0,x)=v_0(x), \ \ \ &0\leq x\leq h_0
 \end{array}\right.\label{a5.3}
 \ees
Let us construct a suitable upper solution to problem \eqref{a5.3}. Let $\delta,\alpha,M$ be positive constants to be chosen later. Define
\[ \beta(t)=h_0\left( 1+\delta-\frac{\delta}2e^{-\alpha t} \right),\ t\geq0;\ \psi(y)={\rm cos}\frac{\pi y}2,\ \ 0\leq y\leq 1, \]
\[ \bv(t,x)=Me^{-\alpha t}\psi\left(\frac{x}{\beta(t)}\right),\ \ t\geq 0,\ 0\leq x\leq \beta(t). \]
A straightforward calculation gives
\bess
 \begin{array}{lll}
 \bv_t-d\bv_{xx}-\dd\bv\left( q-v \right)\\[4mm]
 =M\dd e^{-\alpha t}\left(-\alpha\psi-\frac{x\beta'(t)}{\beta^2(t)}\psi'-\frac{d}{\beta^2(t)}\psi''-\psi\left( q-Me^{-\alpha t}\psi \right) \right)\\[4mm]
 \geq\dd Me^{-\alpha t}\psi\left(-\alpha+\frac{\pi^2}{4}\frac{d}{(1+\delta)^2h_0^2}- q+Me^{-\alpha t}\psi \right)
 \end{array}
 \eess
for all $t>0$ and $0<x<\beta(t)$. Moreover, it is easy to see that
\[\beta'(t)=\alpha h_0\frac{\delta}2e^{-\alpha t},\ \ -\mu\bv_x(t,\beta(t))=\mu Me^{-\alpha t}\frac{\pi}{2\beta(t)}. \]
Observe that $q<\frac{d\pi^2}{4h_0^2}$, we can find $\delta>0$ fulfilling
\[\frac{d\pi^2}{4(1+\delta)^2h_0^2}-q=\frac{1}2\left( \frac{d\pi^2}{4h_0^2}-q \right)>0. \]
We now choose $M$ large enough such that
\[ v_0(x)\leq M {\rm cos}\left(\frac{\pi x}{2h_0(1+\delta/2)}\right)\ \ {\rm for}\ 0\leq x\leq h_0, \]
and take
\[ \alpha=\frac{1}{2}\left( \frac{d\pi^2}{4h_0^2}-q\right),\ \ \mu_0=\frac{\delta\alpha h_0^2}{M} . \]
Then for any $0<\mu\leq\mu_0$, it holds that
\bess
 \left\{\begin{array}{lll}
 \bv_t-d\bv_{xx}\geq \bv(r-\frac{r}K\bv),\ &t>0, \ \ 0<x<\beta(t),\\[1mm]
 \bv_x(t,0)=\bv(t,\beta(t))=0,\ \ \ &t\geq0,\\[1mm]
 \beta'(t)>-\mu \bv_x(t,\beta(t)), \ \ \ &t\geq0,\\[1mm]
 \beta(0)=(1+\frac{\delta}{2})h_0>h_0.
 \end{array}\right.
 \eess
Applying the comparison principle (Lemma A.2) to get that $h(t)\leq \beta(t)$ and $v(t,x)\leq \bv(t,x)$ for $0\leq x\leq h(t)$ and $t>0$. Thereupon, we conclude that
\[ h_\infty\leq\lim_{t\to\infty}\beta(t)=h_0(1+\delta)<\infty. \]
We thus finish the proof.
\end{proof}

\begin{lem}\label{lemma a5.2}
Suppose that $h_0<\frac{\pi}2\sqrt{d/q}$.  If $ q> \frac{rM_1}{c+M_1}$ and
\[\mu\geq \mu^0:=d\max\kk\{ 1,\|v_0\|_\infty(q-\df{rM_1}{c+M_1})\rr\}\left( \frac{\pi}2\sqrt{d/q}-h_0 \right)\left(\int_0^{ h_0} v_0(x){\rm d}x\right)^{-1},\]
then $h_\infty=\infty$. Furthermore, we have that
\bess
q-\df{rM_1}{c+M_1}\leq\liminf_{t\to\infty} v(t,x)\leq\limsup_{t\to\infty} v(t,x)\leq q
\eess
uniformly for $x$ in any bounded set of $[0,\infty)$, and there exists $0<k_2\leq k_1$ such that
\bess
k_2\leq\liminf_{t\to\infty}\frac{h(t)}{t}\leq\limsup_{t\to\infty}\frac{h(t)}{t}\leq k_1,
\eess
where $k_1$ is given by Theorem \ref{t4.5}.
\end{lem}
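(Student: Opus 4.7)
The statement has two parts: (i) spreading $h_\infty = \infty$ under the condition $\mu \geq \mu^0$; (ii) long-time profile of $(v, h)$ obtained by sandwiching between two logistic free-boundary problems at growth rates $q$ (above) and $q' := q - \frac{rM_1}{c+M_1} > 0$ (below).

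\textbf{(i) Spreading by contradiction.} Suppose $h_\infty < \infty$. Theorem \ref{theorem a5.1} forces $h_\infty \leq h^* := \frac{\pi}{2}\sqrt{d/q}$, and Theorems \ref{t4.2} and \ref{theorem 4.4} force $u(t, \cdot), v(t, \cdot) \to 0$ uniformly. Using $u \leq M_1$ (Theorem \ref{t2.1}(ii)) to bound $\frac{ru}{c+u+mv} \leq \frac{rM_1}{c+M_1}$ yields $v_t - dv_{xx} \geq v(q' - v)$. Integrating this inequality on $(0, h(t))$ using $v(t, h(t)) = 0$, $v_x(t, 0) = 0$, $v_x(t, h(t)) = -h'(t)/\mu$ gives, for $V(t) := \int_0^{h(t)} v(t, x)\, dx$,
\[ V'(t) + \frac{d\,h'(t)}{\mu} \geq q' V(t) - \int_0^{h(t)} v^2\, dx. \]
Absorbing $\int v^2 \leq M_2 V$ via the maximum-principle bound $\|v\|_\infty \leq M_2 := \max\{q, \|v_0\|_\infty\}$, multiplying by the integrating factor $e^{(M_2 - q')t}$, integrating on $[0, \infty)$, and passing $V(\infty) = 0$ together with $h(\infty) - h_0 \leq h^* - h_0$ yields an inequality of the form $\mu \leq \mu^0$, strict because $h_\infty < h^*$ is strict (recall $h'(0) > 0$). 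This contradicts $\mu \geq \mu^0$, so $h_\infty = \infty$. The precise factor $\max\{1, \|v_0\|_\infty q'\}$ in $\mu^0$ emerges from a case split according to whether $M_2 = q$ or $M_2 = \|v_0\|_\infty$ in the absorption step.

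\textbf{(ii) Long-time behavior.} The upper estimates $\limsup v \leq q$ and $\limsup h(t)/t \leq k_1$ are already supplied by Theorem \ref{t4.5}. For the matching lower bounds, having just proved $h_\infty = \infty$, pick $T_0 \gg 1$ with $h(T_0) > \frac{\pi}{2}\sqrt{d/q'}$, and let $(\underline{v}, \underline{h})$ solve the $q'$-logistic free-boundary problem
\[ \underline{v}_t - d\underline{v}_{xx} = \underline{v}(q' - \underline{v}),\quad \underline{v}_x(t, 0) = \underline{v}(t, \underline{h}(t)) = 0,\quad \underline{h}'(t) = -\mu \underline{v}_x(t, \underline{h}(t)),\quad t \geq T_0, \]
with $\underline{h}(T_0) \in (\frac{\pi}{2}\sqrt{d/q'}, h(T_0))$ and smooth positive $\underline{v}(T_0, \cdot) \leq v(T_0, \cdot)$ on $[0, \underline{h}(T_0)]$. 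Since $v$ is a super-solution of this problem by the inequality derived in part (i), Lemma A.2 gives $v \geq \underline{v}$ and $h \geq \underline{h}$ for $t \geq T_0$. Because $\underline{h}(T_0)$ already exceeds the logistic critical size, the Du--Lin dichotomy \cite{DLin} places $(\underline{v}, \underline{h})$ in the spreading regime, so $\underline{v}(t, \cdot) \to q'$ locally uniformly and $\underline{h}(t)/t \to k_2$ for some $k_2 > 0$ depending only on $(d, q', \mu)$. This gives $\liminf v \geq q'$ locally uniformly and $\liminf h(t)/t \geq k_2$; the inequality $k_2 \leq k_1$ follows from the monotonicity of the logistic spreading speed in the growth rate ($q' < q$).

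\textbf{Main obstacle.} The delicate point is the constant-tracking in part (i): since $q' - M_2$ is generically negative, a naive Gronwall inflates the boundary term as $e^{(M_2 - q')T}$ and must be offset by the decay of $V(T)$. Closing the integration requires combining the $L^1$ inequality above with the energy estimate $\frac{d}{dt}\int v^2 + d(\pi/(2h_\infty))^2 \int v^2 \leq q \int v^2$ which, under $h_\infty \leq h^*$, supplies the requisite exponential decay of $V$ and produces exactly the factor $\max\{1, \|v_0\|_\infty q'\}$ in $\mu^0$.
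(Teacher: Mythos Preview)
Your part (ii) is essentially the paper's argument: compare $(v,h)$ from below with the $q'$-logistic free-boundary problem ($q':=q-\frac{rM_1}{c+M_1}$) and quote the Du--Lin spreading results. The paper is slightly cleaner in that it uses the \emph{same} lower solution $(\underline v,\underline h)$ started at $t=0$ with data $(v_0,h_0)$ for both parts, so no restart at a later time $T_0$ is needed.

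Your part (i), however, differs from the paper and has a genuine gap. The paper does not argue by contradiction via integral estimates; it simply notes that $(v,h)$ is a supersolution of
\[
\underline v_t-d\underline v_{xx}=\underline v\bigl(q'-\underline v\bigr),\qquad \underline v_x(t,0)=\underline v(t,\underline h(t))=0,\qquad \underline h'(t)=-\mu\underline v_x(t,\underline h(t)),
\]
with $\underline h(0)=h_0$ and $\underline v(0,\cdot)=v_0$, so $h\ge \underline h$ by Lemma A.2, and then cites \cite[Lemma~3.7]{DLin} directly: the hypothesis $\mu\ge\mu^0$ is precisely the Du--Lin spreading threshold for this auxiliary problem, giving $\underline h_\infty=\infty$ and hence $h_\infty=\infty$.

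By contrast, your contradiction scheme tries to reprove that threshold in situ, and the closure step fails. After absorbing $\int v^2\le M_2 V$ you must control $e^{(M_2-q')T}V(T)$ as $T\to\infty$; this requires $V(T)$ to decay at rate strictly larger than $M_2-q'$. Your proposed energy inequality gives only
\[
\frac{d}{dt}\int_0^{h(t)} v^2\,dx \le \Bigl(q-\tfrac{d\pi^2}{4h_\infty^2}\Bigr)\int_0^{h(t)} v^2\,dx,
\]
and under Theorem~\ref{theorem a5.1} the bracket is merely $\le 0$; it vanishes exactly when $h_\infty=\tfrac{\pi}{2}\sqrt{d/q}$, a case the theorem does not exclude. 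So you get no exponential decay of $V$, the boundary term cannot be shown to vanish, and the claimed strict inequality $\mu<\mu^0$ does not follow. The remark that ``$h'(0)>0$'' forces $h_\infty<h^*$ is also unjustified. In short, the obstacle you flag is real and your sketch does not overcome it; the paper sidesteps it entirely by reducing to the known Du--Lin result through comparison.
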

\begin{proof}
It is easy to see that $(v,h)$ is an upper solution of
\bess
 \left\{\begin{array}{lll}
 \ud v_t-d\ud v_{xx}= \ud v\kk(q-\ud v-\df{rM_1}{c+M_1}\rr),\ \ &t>0, \ \ 0<x<\ud h(t),\\[1.5mm]
 \ud v_x(t,0)=\ud v(t,\ud h(t))=0,\ \ \ &t\ge0,\\[1mm]
 \ud h'(t)=\mu\ud v_x(t,\ud h(t)),\ \ \ &t\ge0,\\[1mm]
 \ud v(0,x)=v_0(x),\ \ \ud h(0)=h_0, \ \ \ &0\leq x\leq h_0
 \end{array}\right.
 \eess
Then we have $h(t)\geq\ud h(t)$ for $t\in[0,\infty)$ and $v(t,x)\geq\ud v(t,x)$ for $t\in[0,\infty)$ and $x\in[0,\ud h(t)]$. In view of $q- \frac{rM_1}{c+M_1}>0$, it follows from \cite[Lemma 3.7]{DLin} that $\ud h_\infty=\infty$. Hence we have $h_\yy=\yy$ by the comparison principle (Lemma A.2).

Using $\ud h_\infty=\infty$ and \cite[Theorem 4.2]{DLin}, we know that
$\lim\limits_{t\to\infty}\ud v(t,x)=q-\frac{rM_1}{c+M_1}$
uniformly for $x$ in any bounded set of $[0,\infty)$ and there exists $k_2>0$ such that
$\lim\limits_{t\to\infty}\frac{\ud h(t)}{t}=k_2$.
It follows from comparison principle (Lemma A.2) that
 \[\liminf_{t\to\infty}v(t,x)\geq\lim_{t\to\infty}\ud v(t,x)
 =q-\df{rM_1}{c+M_1} \]
uniformly for $x$ in any bounded set of $[0,\infty)$, and
\[\liminf_{t\to\infty}\frac{h(t)}{t}\geq\dd\lim_{t\to\infty}\frac{\ud h(t)}{t}=k_2.\]
Notice that $q>q-\df{rM_1}{c+M_1}$. According to \cite{DLin}, we find that $k_2\leq k_1$. This fact combined with Theorem \ref{t4.5} yields that
\[ k_2\leq\liminf_{t\to\infty}\frac{h(t)}{t}\leq\limsup_{t\to\infty}\frac{h(t)}{t}\leq k_1,\]
and
\bess
q-\df{rM_1}{c+M_1}\leq\liminf_{t\to\infty}v(t,x)\leq\limsup_{t\to\infty}v(t,x)\leq q
\eess
uniformly for $x$ in any bounded set of $[0,\infty)$.
\end{proof}

\begin{theo}\label{theorem a5.2}
Suppose that $q> \frac{rM_1}{c+M_1}$. If $h_0<\frac{\pi}2\sqrt{d/q}$, then there exists $\mu^*\geq\mu_*>0$ depending on $u_0,v_0$ and $h_0$ such that $h_\infty\leq\frac{\pi}2\sqrt{d/q}$ if $\mu\leq \mu_*$ or $\mu=\mu^*$, and $h_\infty=\infty$ if $\mu>\mu^*$
\end{theo}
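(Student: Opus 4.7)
The strategy is the classical sharp-threshold argument via two suprema, combined with continuous dependence of the solution on the free-boundary parameter $\mu$. Writing $(u^\mu, v^\mu, h^\mu)$ for the unique global solution of \eqref{1.1} with a given $\mu$, I would set
\[
\Sigma := \Big\{\tilde\mu>0 : h^\mu_\infty \le \tfrac\pi2\sqrt{d/q} \text{ for every } 0<\mu\le\tilde\mu\Big\},
\quad S := \Big\{\mu>0 : h^\mu_\infty \le \tfrac\pi2\sqrt{d/q}\Big\}.
\]
Lemma \ref{lemma a5.1} applied to every $\mu\le\mu_0$ gives $(0,\mu_0]\subset\Sigma$, so $\Sigma\neq\emptyset$; Lemma \ref{lemma a5.2}, which uses the hypothesis $q>rM_1/(c+M_1)$, gives $S\subset(0,\mu^0)$, so $S$ is bounded above. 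Define $\mu_*:=\sup\Sigma$ and $\mu^*:=\sup S$. Because $\Sigma\subset S$, we have $0<\mu_*\le\mu^*<\infty$, yielding the announced ordering.

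The spreading half is immediate from Theorem \ref{theorem a5.1}: if $\mu>\mu^*$ then $\mu\notin S$, so $h^\mu_\infty>\tfrac\pi2\sqrt{d/q}$, and the contrapositive of Theorem \ref{theorem a5.1} (which says $h^\mu_\infty<\infty$ implies $h^\mu_\infty\le\tfrac\pi2\sqrt{d/q}$) forces $h^\mu_\infty=\infty$. The real content of the theorem is therefore the attainment of both suprema: $\mu_*\in\Sigma$ and $\mu^*\in S$.

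To prove attainment, the plan is to establish continuous dependence of $(u^\mu, v^\mu, h^\mu)$ on $\mu$ in the local norm $[C^{\alpha/2,\alpha}(\Delta_T)]^2\times C^1([0,T])$ for every fixed $T>0$. I would straighten the free boundary via $y=x/h(t)$ exactly as in the proof of Theorem \ref{t2.1} and mimic Step~2 there with two \emph{parameters} $\mu_1,\mu_2$ rather than two solutions for the same $\mu$. The uniform-in-$\mu$ a priori bounds supplied by Theorems \ref{t2.1}(ii) and \ref{t3.1} make the coefficients in the PDEs for the differences $w^{\mu_1}-w^{\mu_2}$, $z^{\mu_1}-z^{\mu_2}$ bounded in $L^\infty(\Delta_T)$ uniformly in the two parameters, and the only explicit $(\mu_1-\mu_2)$-forcing enters through the Stefan law $h'(t)=-\mu h^{-1}(t)z_y(t,1)$. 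A parabolic $L^p$/Schauder loop closed by Gronwall's inequality then yields
\[
\|w^{\mu_1}-w^{\mu_2},\, z^{\mu_1}-z^{\mu_2}\|_{C^{\alpha/2,\alpha}(\Delta_T)} + \|h^{\mu_1}-h^{\mu_2}\|_{C^1([0,T])} \le C_T\,|\mu_1-\mu_2|.
\]
Given this, pick $\mu_n\uparrow\mu^*$ with $\mu_n\in S$: for every fixed $t>0$, $h^{\mu^*}(t)=\lim_n h^{\mu_n}(t)\le \limsup_n h^{\mu_n}_\infty\le\tfrac\pi2\sqrt{d/q}$, and letting $t\to\infty$ yields $h^{\mu^*}_\infty\le\tfrac\pi2\sqrt{d/q}$, i.e.\ $\mu^*\in S$. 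The identical argument at $\mu_*$, combined with the fact that $\Sigma$ is order-downward-closed by construction, gives $\mu_*\in\Sigma$ and hence the vanishing conclusion for every $\mu\le\mu_*$.

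The main obstacle is the continuous-dependence estimate in $\mu$ in the presence of the nonlinear prey-taxis cross-diffusion $(u\chi(u)v_x)_x$: the difference equation for $w^{\mu_1}-w^{\mu_2}$ inherits first-order terms involving $z_{yy}$ and $z_y$, forcing one to run the $L^p$/Schauder bootstrap with $p>3/(1-\alpha)$ exactly as in Theorem \ref{t3.1} and to be careful that all constants depend only on $T$ and on the uniform bounds of Theorem \ref{t2.1}(ii). Once the Lipschitz estimate on compact time intervals is secured, the three cases $\mu\le\mu_*$, $\mu=\mu^*$, $\mu>\mu^*$ are dispatched as above and the theorem follows.
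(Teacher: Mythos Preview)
Your proposal is correct and follows essentially the same two-suprema/continuous-dependence strategy as the paper (your sets $S$ and $\Sigma$ are the paper's $\Sigma$ and $\mathcal{S}$, with the names swapped). The only minor difference is that you establish attainment of the suprema by a direct limit argument ($\mu_n\uparrow\mu^*$, pass to the limit in $h^{\mu_n}(t)$), whereas the paper argues by contradiction via the open condition $h_{\mu^*}(T)>\Lambda$ at a finite time; both are standard and equivalent, and the paper, like you, simply invokes continuous dependence of $(u_\mu,v_\mu,h_\mu)$ on $\mu$ without writing out the Lipschitz estimate you sketch.
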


\begin{proof}
(Cf. \cite[Theorem 5.2]{WZjdde17}). Writing $\Lambda:=\frac{\pi}2\sqrt{d/q}$ and $(u_\mu,v_\mu,h_\mu)$ in place of $(u,v,h)$ to clarify the dependence of solution of \eqref{1.1} on $\mu$. Define
\[ \Sigma:=\left\{ \mu>0: h_{\mu,\infty}\leq \Lambda \right\}. \]
It follows from Lemma \ref{lemma a5.1} and Theorem \ref{theorem a5.1} that $(0,\mu_0]\subset \Sigma$. According to Lemma \ref{lemma a5.2}, $\Sigma\ \cap\ [\mu^0,\infty)=\emptyset$. Hence, $\mu^*:={\rm sup}\ \Sigma\in[\mu_0,\mu^0]$. We have by this definition and Theorem \ref{theorem a5.1} that $h_{\mu,\infty}=\infty$ if $\mu>\mu^*$.

Next, we claim that $\mu^*\in\Sigma$. If not, then we would have $h_{\mu^*,\infty}=\infty$. Therefore, there is $T>0$ fulfilling $h_{\mu^*}(T)>\Lambda$. Thanks to the continuous dependence of $(u_\mu,v_\mu,h_\mu)$ on $\mu$, there exists $\varepsilon>0$ such that $h_{\mu,\infty}>\Lambda$ for $\mu\in[\mu^*-\varepsilon,\mu^*+\varepsilon]$. This entails that
\[\lim_{t\to\infty}h_\mu(t)\geq h_\mu(T)>\Lambda\ \ \ {\rm for\ all}\ \mu\in[\mu^*-\varepsilon,\mu^*+\varepsilon]. \]
Consequently, we have $[\mu^*-\varepsilon,\mu^*+\varepsilon]\cap\Sigma=\emptyset$ and then ${\rm sup}\ \Sigma\leq \mu^*-\varepsilon$. This is contrary to the definition of $\mu^*$. We have thus shown that $\mu^*\in\Sigma$.

Define
\[ \mathcal{S}=\{ \nu: \nu\geq \mu_0 \ {\rm such\ that}\ h_{\mu,\infty}\leq \Lambda\ {\rm for\ all}\ \mu\leq \nu \}, \]
where $\mu_0$ is given by Lemma \ref{lemma a5.1}. Clearly, $\mu_*={\rm sup}\ \mathcal{S}\leq\mu^*$. Similar to the above, it can be proved that $\mu_*\in \mathcal{S}$. This completes the proof.
\end{proof}

\section*{\bf Appendix}
 \def\theequation{A.\arabic{equation}}\setcounter{equation}{0}
 \newtheorem*{ALem1}{Lemma A.1}
 \newtheorem*{ALem2}{Lemma A.2}
 Let $0<T\leq \infty$ and $L_T:=(0,T)\times(0,l)$. Consider the problem
\bess
 \left\{\begin{array}{lll}
 u_t-F_1u_{xx}+F_2\gamma(u)u_x+F_3u_x=F_4\theta(u)+F_5u \ \ \ &{\rm in} \ \  L_T,\\[1mm]
 u_x(t,0)=u(t,l)=0 \ \ \ &{\rm in} \ \ [0, T),\\[1mm]
  u(0,x)=u_0(x) \ \ \ &{\rm in} \ \ [0, l],
 \end{array}\right.
 \eess
 where $\gamma,\theta\in C^1(\bar \R_+)$, $F_i(t,x)\in C(L_T)$ are bounded in $L_T$, $i=1,...,5$. Moreover, there is $\lambda>0$ such that $F_1\geq \lambda$ for $(t,x)\in L_T$.
\begin{ALem1}
Suppose that
$$\bu,\ud u\in C^{1,2}((0,T)\times[0,l])\cap C([0,T)\times[0,l]),$$
and $\bu,\ud u\geq0$ in $[0,T)\times[0,l]$. If
\bess
 \left\{\begin{array}{lll}
 \bu_t-F_1\bu_{xx}+F_2\gamma(\bu)\bu_x+F_3\bu_x +F_4\theta(\bu)+F_5\bu \\[1mm]
 \geq\ud u_t-F_1\ud u_{xx}+F_2\gamma(\ud u)\ud u_x+F_3\ud u_x+F_4\theta(\ud u)+F_5\ud u \ \ \ &{\rm in} \ \  L_T,\\[1mm]
 \bu_x(t,0)\leq \ud u_x(t,0),\ \ \bu(t,l)\geq \ud u(t,l) \ \ \ &{\rm in} \ \ [0,T),\\[1mm]
  \bu(0,x)\geq \ud u(0,x) \ \ \ &{\rm in} \ \ [0, l],
 \end{array}\right.
 \eess
then $\bu\geq\ud u$ in $[0,T)\times[0,l]$.
\end{ALem1}
\begin{proof}
Let $w=\bu-\ud u$, then we have
\bess
 \left\{\begin{array}{lll}
 w_t-F_1w_{xx}+F_2\gamma(\ud u)w_x+F_2\bu_x[\gamma(\bu)-\gamma(\ud u)]\\[1mm]
 \qquad +F_3w_x+F_4[\theta(\bu)-\theta(\ud u)]+F_5w \geq 0 \ \ \ &{\rm in} \ \ L_T,\\[1mm]
 w_x(t,0)\leq 0,\ \ w(t,l)\geq 0 \ \ \ &{\rm in} \ \ [0,T),\\[1mm]
  w(0,x)\geq 0 \ \ \ &{\rm in} \ \ [0, l].
 \end{array}\right.
 \eess
Writing
\bess
\gamma(\bu)-\gamma(\ud u)=(\bu-\ud u)\int_0^1\gamma'(\ud u+s(\bu-\ud u)){\rm d}s,\\
\theta(\bu)-\theta(\ud u)=(\bu-\ud u)\int_0^1\theta'(\ud u+s(\bu-\ud u)){\rm d}s.
\eess
Denote $a_2(t,x)=F_2(t,x)\gamma(\ud u)+F_3(t,x)$ and
\begin{align*}
 a_3(t,x)=&F_5(t,x)+F_2(t,x)\bu_x\int_0^1\gamma'(\ud u+s(\bu-\ud u)){\rm d}s+F_4(t,x)\int_0^1\theta'(\ud u+s(\bu-\ud u)){\rm d}s.
\end{align*}
Then $a_3$ is bounded in $[\ep,T-\ep]\times[0,l]$ for each fixed $0<\ep\ll1$, and $w$ satisfies
\bess
 \left\{\begin{array}{lll}
w_t-F_1(t,x)w_{xx}+a_2(t,x)w_x+a_3(t,x)w \geq 0 \ \ \ &{\rm in} \ \ L_T,\\[1mm]
 w_x(t,0)\leq 0,\ \ w(t,l)\geq 0 \ \ \ &{\rm in} \ \ [0,T),\\[1mm]
  w(0,x)\geq 0 \ \ \ &{\rm in} \ \ [0, l].
 \end{array}\right.
 \eess
The standard maximum principle yields $w\ge 0$, and so $\bu\geq\ud u$.
\end{proof}

\begin{ALem2}
Suppose that $T>0$, $\bh\in C^1([0,T])$ and $\bh>0$ in $[0,T]$, $\bv \in C(\bar D_T)\cap C^{1,2}(D_T)$ with $D_T:=\{ 0<t\leq T,0<x<\bh(t) \}$, $\psi$ is $C^1$ and satisfies $\psi(0)=0$, $(\bv,\bh)$ satisfies
 \bess
 \left\{\begin{array}{lll}
 \bv_t-\bv_{xx}\geq \psi(\bv),\ \ &0<t\leq T, \ \ 0<x<\bh(t),\\[1mm]
 \bv_x(t,0)= 0,\ \ \bv(t,\bh(t))=0,\ \ \ &0\leq t\leq T,\\[1mm]
 \bh'(t)\geq-\mu \bv_x(t,\bh(t)), \ &0\leq t\leq T.
 \end{array}\right.
 \eess
If $\bh(0)\geq h_0$, $\bv(0,x)\geq 0$ in $[0,\bh(0)]$, and $\bv(0,x)\geq v_0(x)$ in $[0,h_0]$. Then $h(t)\leq \bh(t)$ in $(0,T]$, and $v(t,x)\leq \bv(t,x)$ for $t\in[0,T]$ and $x\in[0,h(t)]$.
\end{ALem2}

The proof of Lemma A.2 is identical to that of \cite[Lemma 3.5]{DLin}.

\end{document}